\theoremstyle{definition}
\newtheorem{definition}{Definition}[section]
\theoremstyle{plain}
\newtheorem{lemma}[definition]{Lemma}
\newtheorem{theorem}[definition]{Theorem}
\newtheorem{corollary}[definition]{Corollary}
\theoremstyle{remark}
\begin{document}

\title[uniform local definable cell decomposition]{Uniform local definable cell decomposition for locally o-minimal expansion of the group of reals} 
\author[M. Fujita]{Masato Fujita}
\address{Department of Liberal Arts,
Japan Coast Guard Academy,
5-1 Wakaba-cho, Kure, Hiroshima 737-8512, Japan}
\email{fujita.masato.p34@kyoto-u.jp}

\begin{abstract}
We demonstrate the following uniform local definable cell decomposition theorem in this paper.

Consider a structure $\mathcal M = (M, <,0,+, \ldots)$ elementarily equivalent to a locally o-minimal expansion of the group of reals $(\mathbb R, <,0,+)$. 
Let $\{A_\lambda\}_{\lambda\in\Lambda}$ be a finite family of definable subsets of $M^{m+n}$.
There exist an open box $B$ in $M^n$ containing the origin and a finite partition of definable sets $M^m \times B = X_1 \cup \ldots \cup X_k$ such that $B=(X_1)_b \cup \ldots \cup (X_k)_b$ is a definable cell decomposition of $B$ for any $b \in M^m$ and $X_i \cap A_\lambda = \emptyset$ or $X_i \subset A_\lambda$ for any $1 \leq i \leq k$ and $\lambda \in \Lambda$.
Here, the notation $S_b$ denotes the fiber of a definable subset $S$ of $M^{m+n}$ at $b \in M^m$.
\end{abstract}

\subjclass[2010]{Primary 03C64}

\keywords{locally o-minimal structure, uniform local definable cell decomposition}

\maketitle

\section{Introduction}\label{sec:intro}
The author introduced the notion of uniform local o-minimality of the second kind and investigated its basic properties in \cite{Fuji}.
A definably complete uniformly locally o-minimal structure of the second kind admits the following local definable cell decomposition:

\begin{quotation}
Let $\{A_\lambda\}_{\lambda\in\Lambda}$ be a finite family of definable subsets of $M^n$.
For any point $a \in M^n$, there exist an open box $B$ containing the point $a$ and a definable cell decomposition of $B$ partitioning the finite family $\{B \cap A_\lambda\;|\; \lambda \in \Lambda \text{ and }  B \cap A_\lambda \not= \emptyset\}$.
\end{quotation}

The purpose of this paper is to extend the above theorem when the structure is a structure $\mathcal M = (M, <,0,+, \ldots)$ elementarily equivalent to a locally o-minimal expansion of the group of reals $(\mathbb R, <,0,+)$. 
Our main theorem is as follows:

\begin{theorem}[Uniform local definable cell decomposition]\label{thm:main}
Consider a structure $\mathcal M = (M, <,0,+, \ldots)$ elementarily equivalent to a locally o-minimal expansion of the group of reals $(\mathbb R, <,0,+)$.
Let $\{A_\lambda\}_{\lambda\in\Lambda}$ be a finite family of definable subsets of $M^{m+n}$.
There exist an open box $B$ in $M^n$ containing the origin and a finite partition of definable sets 
\begin{equation*}
M^m \times B = X_1 \cup \ldots \cup X_k
\end{equation*}
such that $B=(X_1)_b \cup \ldots \cup (X_k)_b$ is a definable cell decomposition of $B$ for any $b \in M^m$ and $X_i \cap A_\lambda = \emptyset$ or $X_i \subset A_\lambda$ for any $1 \leq i \leq k$ and $\lambda \in \Lambda$.
Here, the notation $S_b$ denotes the fiber of a definable subset $S$ of $M^{m+n}$ at $b \in M^m$.
\end{theorem}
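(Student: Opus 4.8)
The plan is to argue by induction on the fibre dimension $n$, proving the statement simultaneously for all parameter dimensions $m$. Since the structures under consideration lie in the class for which the local definable cell decomposition quoted in the introduction holds, I may freely use that decomposition, the local monotonicity theorem and definable completeness. Two reductions cost nothing at the outset: because a common refinement of finitely many compatible fibred decompositions is again a compatible fibred decomposition, it suffices to treat a single definable set $A\subseteq M^{m+n}$ at a time and then take a common refinement over $\lambda\in\Lambda$; and because the additive group acts by translation, I may center the box at the origin. Fix also a locally o-minimal expansion $\mathcal R$ of $(\mathbb R,<,0,+)$ with $\mathcal M\equiv\mathcal R$, to be used for transfer.

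The heart of the matter is a \emph{uniform local finiteness} statement in the base case $n=1$: for definable $A\subseteq M^m\times M$ there should exist an open interval $I=(-\varepsilon,\varepsilon)$ and an integer $N$, \emph{both independent of} $b$, so that every fibre $A_b\cap I$ is a union of at most $N$ points and open subintervals. For a fixed standard $N$ and a named $\varepsilon$, the assertion that each fibre $A_b$ has at most $N$ boundary points in $(-\varepsilon,\varepsilon)$ is a first-order property of the structure; hence it is enough to produce a standard $N$ and some witnessing $\varepsilon$ in $\mathcal R$, after which $\mathcal M\equiv\mathcal R$ transports a (possibly nonstandard) witness $\varepsilon\in M$ and the same bound $N$ back to $\mathcal M$. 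In $\mathcal R$ I would argue by a completeness argument: were the number of boundary points of $A_b$ in every interval about $0$ unbounded as $b$ ranges over $\mathbb R^m$, one could locate a parameter along which boundary points accumulate at $0$, and since the language carries only the additive group — so that no definable scaling is available to manufacture such accumulation from a single family — this would contradict local o-minimality of $\mathcal R$. This is exactly where expanding the \emph{group} rather than the field of reals, together with the passage through $\mathcal R$, is indispensable, and I expect it to be the main obstacle.

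Granting the uniform pair $(\varepsilon,N)$, the base case assembles as follows. For each $b$ the at most $N$ boundary points of $A_b\cap I$ are listed increasingly by definable functions $\xi_1(b)\le\cdots\le\xi_N(b)$, with a fixed default value assigned when fewer than $N$ points occur. Partitioning $M^m$ according to which $\xi_i$ are defined, which lie in $I$, which coincide, and on which side $A_b$ lies, and applying the local monotonicity theorem with definable completeness to see that the relevant $\xi_i$ are continuous on each piece, I obtain finitely many definable families of graphs $\{(b,\xi_i(b))\}$ and of bands $\{(b,y):\xi_i(b)<y<\xi_{i+1}(b)\}$. These are the fibred cells $X_1,\dots,X_k$: each fibre $(X_j)_b$ is a point or an open interval, together they partition $\{b\}\times I$, and by construction each $X_j$ is contained in or disjoint from $A$. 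This settles $n=1$.

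For the inductive step I would apply the case of $n-1$ fibre variables, with parameter space $M^m$, to the images of the $A_\lambda$ under projection to $M^m\times M^{n-1}$, obtaining a box $B'\subseteq M^{n-1}$ and a fibred decomposition $M^m\times B'=\bigcup_j Y_j$. Treating $M^m\times B'$ as a parameter space of dimension $m+(n-1)$, I then apply the base case to the last coordinate, feeding in the original sets $A_\lambda$ together with the graphs of the section functions describing the $Y_j$, to produce a uniform interval $B''\subseteq M$ and continuous sections over each $Y_j$. Setting $B=B'\times B''$ and taking the cells to be the graphs and bands of these sections over the $Y_j$ yields the required partition of $M^m\times B$, compatible with the $A_\lambda$ in every fibre. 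The one genuinely hard point throughout is the global uniformity over $b\in M^m$ of the radius $\varepsilon$ and the bound $N$: this is precisely what separates the present theorem from the pointwise decomposition of \cite{Fuji}, and the absence of multiplication together with elementary equivalence to the real model is what secures it; the continuity and gluing of the section functions across the pieces of the parameter space is routine by comparison.
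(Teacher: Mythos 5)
You have correctly isolated the crux, namely a pair $(\varepsilon,N)$, uniform over all parameters $b$, bounding the number of boundary points of every fibre near the origin; this is exactly the paper's Uniformity Theorem (Theorem \ref{thm:uniform}), and your base-case assembly from boundary functions $\xi_i$ matches the paper's proof of Theorem \ref{thm:udcd}. However, your justification of the uniformity statement in $\mathcal R$ is not a proof, and the sketched contradiction does not go through. If uniform boundedness fails, then for every $r$ and $N$ \emph{some} fibre $A_b$ has more than $N$ boundary points in $]-r,r[$; but each individual fibre is locally finite near $0$ (local o-minimality applies fibrewise), so no single parameter exhibits accumulation at the origin, and there is no definable or limiting procedure that manufactures one: cardinality bounds are not first-order, and a limit of fibres in a merely locally o-minimal structure is uncontrolled. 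The remark that ``no definable scaling is available'' is not an argument — the additive group does provide translations, which the paper uses — and what is needed is a positive global finiteness theorem, not an impossibility claim. The paper obtains the bound through substantial machinery absent from your proposal: dimension theory over $\mathbb R$ resting on countability of $\mathbb Q$ and compactness (Theorem \ref{thm:dim}), the partition of any definable set into finitely many multi-cells (Theorem \ref{thm:multi-cell}), representative sets of connected components (Lemma \ref{lem:onept}), and finally the device of adjoining the radius as a new coordinate in $\mathcal X=\{(r,x,y)\;|\;(x,y)\in X,\ r>0,\ -r<y<r\}$, so that the multi-cell structure is uniform in $r$ and in the parameter simultaneously, after which uniform local o-minimality of the second kind yields the finite bound $K$.

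There is a second gap in your transfer strategy. You transport only the pair $(\varepsilon,N)$ to $\mathcal M$ and then carry out the construction inside $\mathcal M$, invoking a ``local monotonicity theorem'' to partition the parameter space into finitely many definable pieces on which the $\xi_i$ are continuous. That continuity partition is itself a \emph{global} finiteness statement: the paper proves it (in Theorem \ref{thm:udcd}, via Corollary \ref{cor:dim2}, i.e.\ the discontinuity locus has strictly smaller dimension) only over $\mathbb R$, and over general models it is unavailable — indeed the dimension of a projection can exceed that of the original set in such structures, as the paper notes citing \cite[Remark 5.5]{Fuji}, which breaks the inductive dimension argument. The correct route, and the paper's, is to build the entire fibred decomposition over $\mathbb R$ and transfer the full statement at once, which requires the nontrivial observation (Lemma \ref{lem:last}) that ``being a fibrewise cell decomposition of a fixed type compatible with the $A_\lambda$'' is expressible by a first-order sentence in the parameters. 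A smaller, fixable point: your inductive step decomposes the base compatibly with the projections $\pi(A_\lambda)$ \emph{before} analysing the last coordinate, which does not make the fibre data uniform over base cells; as in the paper one must first decompose in the last coordinate over the full base $M^{m+n-1}$, obtaining the pieces $Y_i$, and only then apply the inductive hypothesis to the family $\{\pi(Y_i)\}$.
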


In an locally o-minimal structure which admits a local definable cell decomposition, the image of a definable set under a definable map may be of dimension greater than the dimension of the original definable set as in illustrated in \cite[Remark 5.5]{Fuji}.
However, the image is of dimension not greater then the original set when the universe is the set of reals.
It is demonstrated in Section \ref{sec:dim}.
In Section \ref{sec:multi}, we define multi-cells and show that any definable set is partitioned into finite multi-cells when the structure is a locally o-minimal expansion of the group of reals.
We finally demonstrate the main theorem in Section \ref{sec:udcd}.
We first demonstrate the main theorem when the structure is a locally o-minimal expansion of the group of reals.
Using a standard model-theoretic argument, we extend the theorem to the case in which the structure is a structure elementarily equivalent to a locally o-minimal expansion of the group of reals.

We introduce the terms and notations used in this paper.
The term `definable' means `definable in the given structure with parameters' in this paper.
For a linearly ordered structure $\mathcal M=(M,<,\ldots)$, an open interval is a definable set of the form $\{x \in \mathbb R\;|\; a < x < b\}$ for some $a,b \in M$.
It is denoted by $]a,b[$ in this paper.
An open box in $M^n$ is the direct product of $n$ open intervals.
Let $A$ be a subset of a topological space.
The notations $\operatorname{int}(A)$ and $\overline{A}$ denote the interior and the closure of the set $A$, respectively.
The boundary $\operatorname{bd}(A)$ of $A$ is given by $\overline{A} \setminus  \operatorname{int}(A)$.
The frontier $\partial A$ of $A$ is defined by $\overline{A} \setminus A$.
The notation $|S|$ denotes the cardinality of a set $S$.
It also denotes the absolute value of an element.
The abuse of notation will not confuse readers.

\section{Dimension of the image}\label{sec:dim}
Consider a locally o-minimal structure whose universe is the set of reals $\mathbb R$.
Note that the structure in consideration is strongly locally o-minimal by \cite[Corollary 3.4]{TV}.
It admits local definable cell decomposition by \cite[Theorem 4.2]{Fuji} or \cite[Proposition 13]{KTTT}.
Therefore, the dimension of a definable set is well-defined by \cite[Section 5]{Fuji}.

In general, the image of a definable set under a definable map may be of dimension greater than the original definable set as illustrated in \cite[Example 5.2]{Fuji}.
We show that the image is of dimension not greater than the original definable set when the structure is a locally o-minimal structure whose universe is the set of reals $\mathbb R$.

We first demonstrate three lemmas necessary for the proof of the main result in this section.

\begin{lemma}\label{lem:dim_pre1}
Consider a locally o-minimal structure whose universe is the set of reals $\mathbb R$.
Let $I$ be an open interval in $\mathbb R$. 
Let $\{X_j\}_{j \in J}$ be a family of definable subsets of $I$ such that the index set $J$ is countable and all $X_j$ do not contain open intervals.
Then, the  union of all $X_j$ does not coincides with $I$.
\end{lemma}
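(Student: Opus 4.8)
The plan is to show that each $X_j$ is a \emph{countable} subset of $\mathbb{R}$, and then to finish by a cardinality argument: a countable union of countable sets is countable, whereas an open interval $I$ in $\mathbb{R}$ is uncountable, so the union cannot exhaust $I$.

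First I would fix one index $j \in J$ and analyze $X_j$ locally. By local o-minimality, for each point $x \in I$ there is an open interval $I_x$ containing $x$ such that $X_j \cap I_x$ is a finite union of points and open intervals. Since $X_j$ contains no open interval by hypothesis, the trace $X_j \cap I_x$ can contain no open interval either (any open interval inside $X_j \cap I_x$ would be an open interval contained in $X_j$); hence $X_j \cap I_x$ is a finite set. The only delicate point in the whole argument is exactly this passage from ``finite union of points and open intervals'' to ``finite set,'' and it is where the no-interval hypothesis is used.

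Next I would globalize. The family $\{I_x\}_{x \in I}$ is an open cover of $I$, and since $\mathbb{R}$ is second countable (equivalently, Lindel\"of), I can extract a countable subcover $\{I_{x_k}\}_{k \in \mathbb{N}}$. Then $X_j = \bigcup_{k} (X_j \cap I_{x_k})$ is a countable union of finite sets, hence countable. As $J$ is countable and each $X_j$ is countable, the union $\bigcup_{j \in J} X_j$ is a countable union of countable sets, and therefore countable.

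Finally, since $I$ is an open interval in $\mathbb{R}$ it is uncountable, so $\bigcup_{j \in J} X_j \neq I$. The essential content — and the reason the hypothesis that the universe is $\mathbb{R}$ is indispensable — is the combination of local finiteness with the uncountability and separability of $\mathbb{R}$; in a general locally o-minimal structure neither the Lindel\"of reduction nor the cardinality comparison is available. As an alternative to the cardinality count, one could note that each $X_j$ is closed and discrete, hence nowhere dense, and that $I$ is a Baire space, so $I$ cannot be a countable union of the $X_j$; but the cardinality argument is the most direct.
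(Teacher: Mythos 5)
Your proof is correct, and it is in essence the same argument as the paper's: local o-minimality forces each $X_j$ to be locally finite (the no-interval hypothesis killing the interval components), and then a countability-versus-uncountability comparison finishes. The only real difference is in how the local finiteness is globalized, and here your version is actually tighter than the paper's. The paper takes, for each $j$ and each \emph{rational} $x$, a neighborhood $U_{j,x}$ meeting $X_j$ in finitely many points, and then bounds $\left| \bigcup_j X_j \right| \leq |J| \cdot |\mathbb Q| \cdot \aleph_0 < |I|$; as written this tacitly assumes that the sets $U_{j,x}$ with $x \in \mathbb Q$ cover $X_j$, which is not automatic, since these intervals can be arbitrarily small and an irrational point of $X_j$ may escape all of them. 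Your Lindel\"of step --- cover all of $I$ by the neighborhoods furnished by local o-minimality and extract a countable subcover by second countability of $\mathbb R$ --- closes exactly this gap and yields the same cardinality bound cleanly. Your parenthetical Baire-category alternative (each $X_j$ is locally finite in $I$, hence closed in $I$ and discrete, hence nowhere dense, and $I$ is a Baire space) is also valid and is a genuinely different finish, though for the paper's later uses (Lemma \ref{lem:dim_pre2} picks a point of $I$ outside the union, which both arguments supply) nothing is gained or lost either way.
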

\begin{proof}
For any $j \in J$ and $x \in \mathbb Q$, there exists an open subinterval $U_{j,x}$ of $I$ such that $U_{j,x}$ is an open neighborhood of $x$ and $X_j \cap U_{j,x}$ consists of finite points because the structure is locally o-minimal.
We have
\begin{align*}
\left| \bigcup_{j \in J} X_j \right| \leq |J| \cdot |\mathbb Q| \cdot \aleph_0 = \max \{ |J|, |\mathbb Q| , \aleph_0 \} < |I|\text{.}
\end{align*}
Hence, we have $\bigcup_{j \in J} X_j \not=I$.
\end{proof}

\begin{lemma}\label{lem:dim_pre2}
Consider a locally o-minimal structure whose universe is the set of reals $\mathbb R$.
Let $X$ be a definable subset of $\mathbb R^n$ of dimension $d$. 
Let $\{X_j\}_{j \in J}$ be a family of definable subsets of $X$ of dimension smaller than $d$ such that the index set $J$ is countable.
Then, we have $\bigcup_{j \in J} X_j \not=X$.
\end{lemma}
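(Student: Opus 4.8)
The plan is to reduce the statement to the special case in which the ambient set is an open box in $\mathbb{R}^d$, and then to argue by induction on $d$, using Lemma \ref{lem:dim_pre1} as the one-dimensional engine. First I would exploit the definition of dimension. Since $\dim X = d$, there is a coordinate projection $p \colon \mathbb{R}^n \to \mathbb{R}^d$ whose image $p(X)$ has nonempty interior, hence contains an open box $B$. For a coordinate projection the dimension cannot increase: if $q$ is a further coordinate projection witnessing $\dim p(X_j)$, then $q \circ p$ is again a coordinate projection, so $\dim p(X_j) \le \dim X_j < d$. Assuming toward a contradiction that $\bigcup_{j} X_j = X$, I apply $p$ and intersect with $B$ to obtain definable sets $Y_j := p(X_j) \cap B \subseteq B$ with $\dim Y_j < d$ and $\bigcup_{j} Y_j = B$. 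Ruling this out suffices: a point of $B$ avoiding every $p(X_j)$ lifts through $p$ to a point of $X \setminus \bigcup_j X_j$, the desired contradiction.

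It thus remains to show that a countable family of definable subsets of dimension $< d$ cannot cover an open box $B = B' \times I$ in $\mathbb{R}^d$, where $B' \subseteq \mathbb{R}^{d-1}$ and $I$ is an open interval. I would prove this by induction on $d$. The case $d = 1$ is exactly Lemma \ref{lem:dim_pre1}, since a definable subset of $\mathbb{R}$ of dimension $0$ contains no open interval. For the inductive step, fix $x' \in B'$ and pass to the fiber over $x'$: the definable sets $(Y_j)_{x'} \subseteq I$ cover $I$, so by Lemma \ref{lem:dim_pre1} at least one of them contains an open interval. Hence $B' = \bigcup_j E_j$, where $E_j := \{\, x' \in B' : (Y_j)_{x'} \text{ contains an open interval} \,\}$ is definable. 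If each $E_j$ has dimension $< d-1$, then the induction hypothesis applied to the box $B'$ in $\mathbb{R}^{d-1}$ gives $\bigcup_j E_j \ne B'$, a contradiction.

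The main obstacle is therefore the fiber estimate $\dim E_j < d - 1$. This is where the dimension theory of \cite[Section 5]{Fuji} must be invoked: the definable set consisting of those points of $Y_j$ lying in the interior of their own fiber has all fibers over $E_j$ of dimension $1$, so its dimension is at least $\dim E_j + 1$; being contained in $Y_j$, this forces $\dim E_j + 1 \le \dim Y_j < d$, that is, $\dim E_j < d-1$. Making this additivity/fiber-dimension inequality precise in the merely locally o-minimal setting—where, as this section stresses, definable maps can raise dimension—is the delicate point and the step I would write out most carefully. As an alternative that bypasses the induction, one could note that over $\mathbb{R}$ each $\overline{p(X_j)}$ is a closed definable set of empty interior, hence nowhere dense, so the Baire category theorem prevents $\bigcup_j \overline{p(X_j)}$ from covering the open box $B$; this still relies on the dimension-theoretic fact that closure preserves empty interior, but it avoids the fiber formula altogether.
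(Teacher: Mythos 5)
Your overall architecture in the second half (reduce to a box $B = B'\times I$, induct on $d$, fiber over $B'$, use Lemma \ref{lem:dim_pre1} on the fibers) coincides with the paper's proof, and your sets $E_j$ are exactly the paper's $Y_j$. But the very first reduction contains a genuine gap. You assume dimension is characterized by coordinate projections: that $\dim X = d$ produces a projection $p$ with $p(X)$ of nonempty interior, and, crucially, that $\dim p(X_j) \le \dim X_j$ because a composition of coordinate projections is again one. In this paper dimension is the notion of \cite[Section 5]{Fuji}, defined via definable homeomorphic embeddings of open boxes, and in that setting definable images --- projection images included --- can have dimension \emph{strictly larger} than the source (\cite[Remark 5.5]{Fuji}); ruling this out over $\mathbb{R}$ is the entire purpose of Section \ref{sec:dim}. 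The inequality $\dim p(X_j) \le \dim X_j$ is a special case of Theorem \ref{thm:dim}, whose proof in the paper \emph{uses} Lemma \ref{lem:dim_pre2}, so invoking it here is circular, and you offer no independent argument. The paper sidesteps this by pulling back rather than pushing forward: by the definition of dimension there is a definable embedding $f\colon B \to X$ of a $d$-dimensional box, and $\dim f^{-1}(X_j) \le \dim X_j$ is immediate under the embedded-box definition (compose embeddings; \cite[Corollary 5.2, Theorem 5.4]{Fuji}). Your Baire-category alternative inherits the same flaw, since it too needs $\overline{p(X_j)}$ to have empty interior.

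The second gap is the fiber estimate $\dim E_j < d-1$, which you correctly identify as the delicate point but do not prove. Your sketched additivity inequality (fibers of dimension $\ge 1$ over $E_j$ force $\dim \ge \dim E_j + 1$) is not among the tools available at this stage and is precisely the kind of statement that fails in general locally o-minimal structures; it would need the machinery this section is building. The paper instead obtains $\dim Y_j < d-1$ by applying \cite[Lemma 5.4]{Fuji} \emph{locally} around each point of $\overline{B}$ and then extracting a finite subcover by compactness --- which is exactly why it first arranges for $B$ to be bounded, a step absent from your proposal. One constructive remark: if you combine the paper's pullback reduction with your Baire observation --- each $X_j \subset B \subset \mathbb{R}^d$ with $\dim X_j < d$ has empty interior, and by \cite[Theorem 5.5]{Fuji} its closure has the same dimension, hence is nowhere dense, so the Baire category theorem forbids covering $B$ --- you get a correct proof that is shorter than the paper's and avoids the fiber induction and Lemma \ref{lem:dim_pre1} altogether. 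As written, however, both of your routes rest on the unproved (and, at this point, circular) projection inequality.
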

\begin{proof}
We demonstrate the lemma by the induction on $d$.
The lemma is obvious when $d=0$.

We next consider the case in which $d>0$.
There exist an open box $B \subset \mathbb R^d$ and a definable map $f:B \rightarrow X$ which is definably homeomorphic onto its image by the definition of dimension.
We may assume that $X$ is an open box $B$ by considering the preimages of $X$ and $X_j$ under $f$ by \cite[Corollary 5.2, Theorem 5.4]{Fuji}.
We may also assume that $B$ is bounded without loss of generality.

Let $\pi:\mathbb R^d \rightarrow \mathbb R^{d-1}$ be the projection forgetting the last coordinate.
Set $B_1 = \pi(B)$ and we have $B=B_1 \times I$ for some open interval $I$.
We consider the sets
\begin{align*}
Y_j = \{x \in B_1\;|\; \pi^{-1}(x) \cap X_j \text{ contains an open interval}\}
\end{align*}
for all $j \in J$.
They are definable and of dimension smaller than $d-1$. 
In fact, for any $x \in \overline{B}$, there exist an open box $B_x$ in $\mathbb R^{d-1}$ and an open interval $I_x$ such that $x \in B_x \times I_x$ and the definable set $Y_{jx} = \{x' \in B_x\;|\; \pi^{-1}(x') \cap X_j \cap I_x\text{ contains an open interval}\}$ is definable and of dimension smaller than $d-1$ by \cite[Lemma 5.4]{Fuji}.
Since $\overline{B}$ is compact, there exist finite points $x_1, \ldots, x_N$ in $\overline{B}$ such that $\{B_{x_i} \times I_{x_i}\}_{i=1}^N$ is an open covering of $\overline{B}$.
We have $Y_j = \bigcup_{i=1}^N Y_{jx_i}$ for any $j \in J$.
Therefore, $Y_j$ is definable and of dimension smaller than $d-1$ by \cite[Corollary 5.4(ii)]{Fuji}.

There exists a point $x \in B_1 \setminus \left( \bigcup_{j \in J} Y_j \right)$ by the induction hypothesis.
Since $x \not\in \bigcup_{j \in J} Y_j$, the intersection of $\pi^{-1}(x)$ with $X_j$ does not contain an open interval for any $j \in J$.
Therefore, we can take $y \in (\{x\} \times I) \setminus \bigcup_{j \in J} X_j$ by Lemma \ref{lem:dim_pre1}.
The point $(x,y) \in X=B$ is not contained in $\bigcup_{j \in J} X_j$.
\end{proof}

\begin{lemma}\label{lem:dim_pre3}
Consider a locally o-minimal structure whose universe is the set of reals $\mathbb R$.
Let $X$ be a definable subset of $\mathbb R^{m+n}$ and $\pi: \mathbb R^{m+n} \rightarrow \mathbb R^m$ be a coordinate projection.
Assume that  the fibers $X_x = \pi^{-1}(x) \cap X$ are of dimension $\leq 0$ for all $x \in \mathbb R^m$.
Then, we have $\dim X \leq \dim \pi(X)$.
\end{lemma}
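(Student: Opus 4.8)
The plan is to bound the dimension of $X$ locally, in a neighborhood of each of its points, by $\dim \pi(X)$, and then to upgrade this local bound to the global inequality by a covering argument powered by Lemma~\ref{lem:dim_pre2}. After reordering coordinates we may assume $\pi$ is the projection onto the first $m$ coordinates, so that the fibers $X_x$ live in the last $n$ coordinates.

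For the local step, I would fix a point $p \in X$ and invoke local definable cell decomposition to obtain an open box $B \ni p$ together with a definable cell decomposition of $B$ partitioning $X \cap B$. Then $X \cap B$ is a finite union of cells $C \subseteq X$, and it suffices to bound $\dim C$ for each. The decisive observation is that every such cell has $0$-dimensional fibers over $\pi$: indeed $C_x = C \cap \pi^{-1}(x) \subseteq X_x$, whence $\dim C_x \le \dim X_x \le 0$ by monotonicity of dimension. Since a cell is built as an iterated sequence of graphs and bands over a base in $\mathbb R^m$, a cell whose fibers over $\pi$ are $0$-dimensional can contain no band direction among the last $n$ coordinates; it is therefore the graph of a continuous definable map $\xi \colon \pi(C) \to \mathbb R^n$ over the cell $\pi(C) \subseteq \mathbb R^m$. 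Consequently $\dim C = \dim \pi(C) \le \dim \pi(X)$, and so $\dim(X \cap B) \le \dim \pi(X)$.

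For the global step, the boxes $B = B_p$ produced above yield an open covering $\{X \cap B_p\}_{p \in X}$ of $X$. As $\mathbb R^{m+n}$ is second countable, its subspace $X$ is Lindel\"of, so I can extract a countable subcover $X = \bigcup_{i \in \mathbb N} (X \cap B_{p_i})$. Each piece is a definable subset of $X$ with $\dim(X \cap B_{p_i}) \le \dim \pi(X)$. If $\dim X > \dim \pi(X)$ held, then $X$ would be a countable union of definable subsets each of dimension strictly smaller than $\dim X$, contradicting Lemma~\ref{lem:dim_pre2}. Hence $\dim X \le \dim \pi(X)$.

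The step I expect to be the main obstacle is the local one, namely making precise that a cell $C \subseteq X$ with $0$-dimensional fibers over $\pi$ satisfies $\dim C = \dim \pi(C)$. This rests on the fact that for a cell all fibers over the coordinate projection to $\mathbb R^m$ share a single dimension and that $\dim C$ equals the sum of this fiber dimension and $\dim \pi(C)$; I would prove it by induction along the recursive construction of cells, in which each of the last $n$ coordinates contributes either a graph (fiber dimension $0$) or a band (fiber dimension $1$), the $0$-dimensional fiber hypothesis ruling out every band. Once this is granted, the remainder is exactly the countable covering and cardinality mechanism already isolated in Lemma~\ref{lem:dim_pre2}.
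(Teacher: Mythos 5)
Your proof is correct, but it follows a genuinely different route from the paper's. The paper's proof is a three-line citation argument: at each point $(a,b)$, \cite[Lemma 5.4]{Fuji} directly supplies a box $B_a \times B_b$ on which $\dim(X \cap (B_a \times B_b)) = \dim \pi(X \cap (B_a \times B_b))$; monotonicity \cite[Lemma 5.1]{Fuji} bounds the right side by $\dim \pi(X)$; and the locality of dimension, $\dim(X) = \sup_{(a,b)} \dim(X \cap (B_a \times B_b))$ \cite[Corollary 5.3]{Fuji}, finishes at once. You instead re-derive the local equality from scratch via local definable cell decomposition (a cell with $0$-dimensional fibers over $\pi$ has no band direction in the last $n$ coordinates, hence is a graph of a continuous definable $\xi\colon \pi(C) \to \mathbb R^n$, so $\dim C = \dim \pi(C)$), which is essentially a proof of the relevant instance of \cite[Lemma 5.4]{Fuji} — note that the step $\dim C = \dim \pi(C)$ tacitly uses invariance of dimension under definable homeomorphisms, available by \cite[Corollary 5.2, Theorem 5.4]{Fuji}, the same facts the paper invokes in the proof of Lemma~\ref{lem:dim_pre2}. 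For the globalization you replace the paper's appeal to \cite[Corollary 5.3]{Fuji} by a Lindel\"of extraction of a countable subcover plus the countable-union Lemma~\ref{lem:dim_pre2}; this is sound and non-circular, since Lemma~\ref{lem:dim_pre2} precedes this lemma and does not use it, though it makes the statement depend on Lemma~\ref{lem:dim_pre2} where the paper's proof does not. What each approach buys: the paper's version is shorter and keeps the dependency structure minimal by outsourcing both the local equality and the locality of dimension to \cite{Fuji}; yours is more self-contained and makes the geometric mechanism (fibers of cells, graph versus band) explicit, at the cost of invoking the cell-structure facts and the heavier cardinality machinery of Lemma~\ref{lem:dim_pre2} where a direct citation of the sup-over-boxes characterization of dimension would suffice.
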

\begin{proof}
For any $(a,b) \in \mathbb R^m \times \mathbb R^n$, there exist open boxes $B_a \subset \mathbb R^m$ and $B_b \subset  \mathbb R^n$ with $(a,b) \in B_a \times B_b$ and $\dim (X \cap (B_a \times B_b)) = \dim \pi(X \cap (B_a \times B_b))$ by \cite[Lemma 5.4]{Fuji}.
We have $\dim \pi(X \cap (B_a \times B_b)) \leq \dim \pi(X)$ by \cite[Lemma 5.1]{Fuji}.
On the other hand, we have $\dim (X) = \displaystyle\sup_{(a,b) \in \mathbb R^m \times \mathbb R^n} \dim (X \cap (B_a \times B_b))$ by \cite[Corollary 5.3]{Fuji}.
We have finished the proof.
\end{proof}

The following theorem is the main theorem of this section.
\begin{theorem}\label{thm:dim}
Consider a locally o-minimal structure whose universe is the set of reals $\mathbb R$.
Let $X$ be a definable set and $f:X \rightarrow \mathbb R^n$ be a definable map.
Then, we have 
\begin{equation*}
\dim(f(X)) \leq \dim(X)\text{.}
\end{equation*}
\end{theorem}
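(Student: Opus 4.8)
The plan is to reduce the statement for an arbitrary definable map to the statement for a coordinate projection, and then to analyse a single projection fibrewise. Write $X \subseteq \mathbb R^p$ and consider the graph $\Gamma = \{(x, f(x)) : x \in X\} \subseteq \mathbb R^{p} \times \mathbb R^{n}$. The projection $\pi_1$ onto the first $p$ coordinates maps $\Gamma$ bijectively onto $X$, so its fibers are singletons; hence Lemma~\ref{lem:dim_pre3} gives $\dim \Gamma \leq \dim \pi_1(\Gamma) = \dim X$. Since $f(X) = \pi_2(\Gamma)$ for the projection $\pi_2$ onto the last $n$ coordinates, it suffices to prove the inequality $\dim \pi(S) \leq \dim S$ for every definable $S$ and every coordinate projection $\pi$. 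Writing a coordinate projection as a composition of projections that each forget a single coordinate, I would reduce further to the case $\pi \colon \mathbb R^{k+1} \to \mathbb R^{k}$ forgetting the last coordinate.

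For such a $\pi$ and a definable $S \subseteq \mathbb R^{k+1}$, I would split the base according to the fiber type: let $B_1 = \{x \in \pi(S) : S_x \text{ contains an open interval}\}$ and $B_0 = \pi(S) \setminus B_1$, both definable, and set $S_i = S \cap \pi^{-1}(B_i)$. Over $B_1$ the fibers are one-dimensional: passing to the fiberwise interior and invoking local definable cell decomposition produces a band cell over a base cell of dimension $\dim B_1$, whence $\dim S_1 \geq \dim B_1 + 1$. Thus the interval part only raises the dimension and causes no difficulty.

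The decisive case is $S_0$, where every fiber is zero-dimensional and I must prove $\dim B_0 \leq \dim S_0$; equivalently, a finite-to-one projection does not lower dimension. This is exactly the inequality that fails for general locally o-minimal structures (compare the behaviour recorded in \cite{Fuji}), so the argument has to use that the universe is $\mathbb R$. I would first restrict to a bounded window: the bounded open intervals with rational endpoints form a countable family covering $\mathbb R$, so $B_0 = \bigcup_J \pi(S_0 \cap (\mathbb R^k \times J))$, and Lemma~\ref{lem:dim_pre2} yields one such $J$ with $\dim \pi(S_0 \cap (\mathbb R^k \times J)) = \dim B_0$. Replacing $S_0$ by $S_0 \cap (\mathbb R^k \times J)$, all fibers are now nonempty, bounded and zero-dimensional. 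Completeness of $\mathbb R$ then makes $g(x) = \inf (S_0)_x$ a well-defined real number, and $g$ is definable with $\operatorname{graph}(g) \subseteq \overline{S_0}$.

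Finally, local definable cell decomposition produces a cell $C \subseteq \pi(S_0)$ of dimension $\dim B_0$ on which $g$ is continuous; then $\pi$ restricts to a definable homeomorphism of $\operatorname{graph}(g|_C)$ onto $C$, so $\dim \overline{S_0} \geq \dim \operatorname{graph}(g|_C) = \dim C = \dim B_0$ by monotonicity and the invariance of dimension under definable homeomorphism \cite{Fuji}. Since passing to the closure does not change the dimension, this gives $\dim S_0 \geq \dim B_0$, and combining the two cases yields $\dim \pi(S) = \max\{\dim B_0, \dim B_1\} \leq \dim S$. I expect the main obstacle to be precisely this zero-dimensional-fiber step: transforming the finite-to-one projection into one admitting a genuine continuous section, which is where the completeness of $\mathbb R$ (for the infimum) and Lemma~\ref{lem:dim_pre2} (to retain full dimension after passing to a countable cover by windows) are indispensable.
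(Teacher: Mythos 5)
Your proof is correct in substance but takes a genuinely different route from the paper's. The paper argues by induction on $\dim X$ and by contradiction: assuming $\dim f(X)\ge d+1$, it reduces (via the induction hypothesis) to a definable surjection onto an open box of dimension $d+1$, uses the same graph-plus-Lemma~\ref{lem:dim_pre3} device as you to replace $f$ by a coordinate projection, and then covers the domain by countably many rational-centered boxes in which local cell decomposition forces each local image to have dimension at most $d$; a single application of Lemma~\ref{lem:dim_pre2} to the target box yields the contradiction. It also needs a base case $\dim X=0$, settled by a cardinality count ($|X|\le\aleph_0$ while an embedded interval has continuum many points). You instead give a direct, non-inductive argument: reduce to projections forgetting one coordinate, split by fiber type, and in the decisive zero-dimensional-fiber case build an explicit definable partial section via $\inf$ over a rational window --- this is where completeness of $\mathbb R$ enters for you, where the paper only exploits countability of $\mathbb Q$ --- concluding via invariance of dimension under definable homeomorphism and the frontier inequality $\dim(\partial X)<\dim(X)$ of \cite[Theorem 5.5]{Fuji}, which the paper's proof never needs. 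Your approach buys a constructive witness for the dimension of the image and avoids both the induction and the reductio; the paper's avoids the frontier theorem and any section construction. (Incidentally, a zero-dimensional definable subset of $\mathbb R$ is discrete and closed by local o-minimality, so your fibers inside the bounded window are finite; hence $g(x)=\min (S_0)_x$ lies in $S_0$ itself and the closure step could be skipped.)

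Two steps need the same windowing care you show elsewhere. First, in the $B_1$ case, the claim that local cell decomposition ``produces a band cell over a base cell of dimension $\dim B_1$'' does not follow from one local decomposition: a fixed box around a point of $S_1$ need not meet the fiber intervals over nearby base points (they may drift off to infinity), so the local base can be lower-dimensional. This is repaired by your own trick: $B_1=\bigcup_{q\in\mathbb Q}\{x\;|\;q\in\operatorname{int}(S_x)\}$ is a countable union of definable sets, Lemma~\ref{lem:dim_pre2} supplies one piece of full dimension, and the constant section $y=q$ over it already gives $\dim S_1\ge\dim B_1$. Second, producing the cell $C$ of dimension $\dim B_0$ on which $g$ is continuous likewise requires covering $\operatorname{graph}(g)$ by countably many rational boxes, decomposing in each, observing that every cell contained in the graph is the graph of a continuous function on its base cell, and invoking Lemma~\ref{lem:dim_pre2} once more to extract a base cell of full dimension; note that you could not instead quote Corollary~\ref{cor:dim2}, since the paper derives it from Theorem~\ref{thm:dim}. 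With these two points made explicit, your argument is a complete and valid alternative proof.
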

\begin{proof}
Let $X$ be a definable subset of $\mathbb R^m$.
We demonstrate the theorem by the induction on $\dim(X)$.

We first prove the theorem when $\dim(X)=0$.
We lead to a contradiction assuming that $\dim(f(X)) \geq 1$.
There exists an open interval $I$ and a definable map $g:I \rightarrow f(X)$ which is definably homeomorphic onto its image.
In particular, we have
\begin{equation*}
|I| = |g(I)| \leq |f(X)| \text{.}
\end{equation*}
On the other hand, the set $X$ is a discrete definable set because $\dim(X)=0$.
There exists a definable open neighborhood $U_x$ of $x$ such that $X \cap U_x$ is a finite set for any $x \in \mathbb Q^m$.
We get 
\begin{equation*}
|X| \leq  |\mathbb Q^m| \cdot \aleph_0  =\aleph_0 \text{.}
\end{equation*}
We finally obtain $|I| \leq |f(X)| \leq |X| \leq \aleph_0$.
Contradiction.

We next consider the case in which $\dim(X)>0$.
Set $d=\dim(X)$.
We lead to a contradiction assuming that $\dim(f(X)) \geq d+1$.
We can reduce to the case in which the image $f(X)$ is an open box $B$ of dimension $d+1$.
In fact, there exists a definable map $g:B \rightarrow f(X)$ for some open box $B$ in $\mathbb R^{d+1}$ and the map $g$ is a definable homeomorphism onto its image.
Set $Y=f^{-1}(g(B))$ and $h=g^{-1} \circ f|_Y:Y \rightarrow B$.
The map $h$ is clearly onto.
If $\dim(Y) <d$, then we have $d+1 = \dim B = \dim(h(Y)) \leq \dim(Y)<d$ by the induction hypothesis.
Contradiction.
We therefore get $\dim(Y)=d$.
We may assume that $f(X)=B$ by considering $Y$ and $h$ in place of $X$ and $f$.

We next reduce to the case in which the map $f$ is the restriction of a coordinate projection.
Consider the graph $G \subset \mathbb R^{m+d+1}$ of the definable map $f$.
Let $\pi:\mathbb R^{m+d+1} \rightarrow \mathbb R^{d+1}$ be the projection onto the last $d+1$ coordinates.
We have $\dim(G) \leq \dim(X)=d$ by Lemma \ref{lem:dim_pre3}.
The dimension of $G$ cannot be smaller than $d$ by the induction hypothesis in the same way as above because the restriction of $\pi$ to $G$ is a surjective map onto the open box $B$ of dimension $d+1$.
We get $\dim(G)=d$.
We may assume that $f:X \rightarrow B$ is the restriction of the projection $\pi:\mathbb R^{m+d+1} \rightarrow \mathbb R^{d+1}$ to $X$.

For any $x \in \mathbb Q^{m+d+1}$, there exists an open box $U_x$ such that $X \cap U_x$ is a finite union of cells because the structure admits local definable cell decomposition by \cite[Theorem 4.2]{Fuji}.
We have $\dim f(X \cap U_x) \leq \dim(X \cap U_x) \leq \dim X=d$ by \cite[Lemma 5.1, Corollary 5.4(ii), (iii)]{Fuji}.
Since $\{U_x\}_{x \in \mathbb Q^{m+d+1}}$ is an open cover of $\mathbb R^{m+d+1}$, we have $B=f(X)=\bigcup_{x  \in \mathbb Q^{m+d+1}} f(X \cap U_{x})$.
On the other hand, We obtain $B \not= \bigcup_{x  \in \mathbb Q^{m+d+1}} f(X \cap U_{x})$ by Lemma \ref{lem:dim_pre2}.
Contradiction.
\end{proof}

\begin{corollary}\label{cor:dim1}
Consider a locally o-minimal structure whose universe is the set of reals $\mathbb R$.
Let $X$ be a definable subset of $\mathbb R^m$ and $f:X \rightarrow \mathbb R^n$ be the restriction of a coordinate projection to $X$.
Assume further that $\dim(f^{-1}(x)) \leq 0$ for all $x \in \mathbb R^n$.
Then, we have $\dim(f(X)) = \dim(X)$.
\end{corollary}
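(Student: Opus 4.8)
The plan is to obtain the two inequalities $\dim(f(X)) \le \dim(X)$ and $\dim(X) \le \dim(f(X))$ separately; together they force the desired equality. Both ingredients are already in hand from the results established earlier in this section, so the work here is essentially bookkeeping rather than new analysis.

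For the first inequality I would simply invoke Theorem \ref{thm:dim}. Since $f$ is in particular a definable map, that theorem yields $\dim(f(X)) \le \dim(X)$ outright, with no use at all of the hypothesis on the fibers. This direction is therefore free, and it is in fact the direction that holds for \emph{arbitrary} definable maps.

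For the reverse inequality I would appeal to Lemma \ref{lem:dim_pre3}, whose hypotheses match the situation of the corollary exactly: the map $f$ is the restriction of a coordinate projection, and every fiber $f^{-1}(x)$ has dimension $\le 0$. After relabeling coordinates so that the projection $\mathbb R^m = \mathbb R^{n} \times \mathbb R^{m-n} \to \mathbb R^{n}$ plays the role of the projection $\pi$ appearing in that lemma, Lemma \ref{lem:dim_pre3} delivers $\dim(X) \le \dim(f(X))$.

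Combining the two inequalities gives $\dim(f(X)) = \dim(X)$, as claimed. The only point that asks for a little care is the translation between the coordinate conventions of the corollary (a projection $\mathbb R^m \to \mathbb R^n$) and those of Lemma \ref{lem:dim_pre3} (a projection $\mathbb R^{m+n} \to \mathbb R^m$); this is purely notational. There is no genuine obstacle in the argument, since all of the substantive content has already been absorbed into Theorem \ref{thm:dim} and Lemma \ref{lem:dim_pre3}.
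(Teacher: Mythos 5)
Your proof is correct and is exactly the paper's argument: the paper's proof reads ``Immediate from Lemma \ref{lem:dim_pre3} and Theorem \ref{thm:dim},'' i.e.\ Theorem \ref{thm:dim} supplies $\dim(f(X)) \leq \dim(X)$ and Lemma \ref{lem:dim_pre3} supplies the reverse inequality, just as you set it up. Your remark on the coordinate relabeling is fine and purely notational, as you say.
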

\begin{proof}
Immediate from Lemma \ref{lem:dim_pre3} and Theorem \ref{thm:dim}.
\end{proof}

\begin{corollary}\label{cor:dim2}
Consider a locally o-minimal structure whose universe is the set of reals $\mathbb R$.
Let $X$ be a definable subset of $\mathbb R^m$ and $f:X \rightarrow \mathbb R$ be a definable function.
The notation $\mathcal D \subset X$ denotes the set of the points at which $f$ is discontinuous.
Then, we have $\dim(\mathcal D) < \dim(X)$.
\end{corollary}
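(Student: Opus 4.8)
The plan is to prove that the definable set $\mathcal D$ has dimension strictly below $d := \dim(X)$; since $\mathcal D \subseteq X$ gives $\dim(\mathcal D) \le d$ for free, only strictness is at stake. First note $\mathcal D$ is definable, because discontinuity of $f$ at a point is expressed by a first-order formula (there is $\varepsilon > 0$ such that every neighbourhood of the point contains a point of $X$ whose $f$-value lies at distance $\ge \varepsilon$). I would split $X$ into the relatively open definable set $L$ of points at which $f$ is locally bounded and the relatively closed locus $N = X \setminus L$ of local unboundedness; since $\mathcal D \cap N \subseteq N$, it suffices to show $\dim(\mathcal D \cap L) < d$ and $\dim(N) < d$. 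In both cases I would argue by contradiction with the same device: if the dimension were $d$, the definition of dimension provides a definable subset definably homeomorphic to an open box of $\mathbb R^d$, which I will cover by countably many good boxes and on which I will reach a contradiction through Lemma~\ref{lem:dim_pre2}.

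For the locally bounded locus, fix $a \in L$ and choose an open box $B_a \ni a$ and $R > 0$ with $|f| < R$ on $X \cap B_a$. Writing $G$ for the graph of $f$, we then have $G \cap (B_a \times \mathbb R) = G \cap (B_a \times ]-R,R[)$, so applying local definable cell decomposition \cite[Theorem 4.2]{Fuji} to the bounded box $B_a \times ]-R,R[$ compatibly with $G$ produces a finite partition $X \cap B_a = C_1 \cup \ldots \cup C_k$ into cells on each of which $f$ is continuous, since the cells contained in $G$ are graphs of continuous definable functions. A point lying in the interior of its cell relative to $X \cap B_a$ is then a continuity point of $f$, whence $\mathcal D \cap B_a \subseteq E_a := \bigcup_{i \ne j} (\overline{C_j} \cap C_i)$, and $\dim(E_a) < d$ by the frontier inequality for cells. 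Now suppose $\dim(\mathcal D \cap L) = d$ and let $M_0 \subseteq \mathcal D \cap L$ be a $d$-dimensional definable set homeomorphic to an open box. Every point of $M_0$ is locally bounded, so $M_0$ is covered by countably many boxes $B_a$ as above, and $M_0 \cap B_a \subseteq \mathcal D \cap B_a \subseteq E_a$ has dimension $< d$; this contradicts Lemma~\ref{lem:dim_pre2}, so $\dim(\mathcal D \cap L) < d$.

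For the locally unbounded locus I would use the definable sets $P_n = \{x \in X : |f(x)| > n\}$, $n \in \mathbb N$. Since $f$ takes finite values, each $x \in N$ lies in $\overline{P_n} \setminus P_n = \partial P_n$ for every $n > |f(x)|$, so $N \subseteq \bigcup_{n \in \mathbb N} \partial P_n$, while $\dim(\partial P_n) < d$ for every $n$ by the frontier inequality (as $P_n \subseteq X$). If $\dim(N) = d$, pick a $d$-dimensional $N_0 \subseteq N$ homeomorphic to an open box; then $N_0 = \bigcup_n (N_0 \cap \partial P_n)$ is a countable union of definable sets of dimension $< d = \dim(N_0)$, contradicting Lemma~\ref{lem:dim_pre2}. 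Hence $\dim(N) < d$, and together with the previous paragraph this yields $\dim(\mathcal D) < d$.

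The step I expect to be the main obstacle is the control of the locally unbounded locus $N$: because the structure is only an expansion of the ordered group of reals, there need not be a definable homeomorphism $\mathbb R \to ]-1,1[$, so one cannot simply renormalise $f$ to a bounded function and detect its discontinuities from the frontier of a single graph in $\mathbb R^{m+1}$. The device of writing $N$ as the countable union of the frontiers $\partial P_n$ and appealing to Lemma~\ref{lem:dim_pre2} is what sidesteps this. The other points requiring care are the frontier inequality $\dim(\partial S) < \dim(S)$ in the present locally o-minimal setting and the extraction of continuity on cells from the cell decomposition of the graph.
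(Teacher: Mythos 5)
Your argument is essentially correct but takes a genuinely different route from the paper. The paper lifts everything to the graph $G$ of $f$: it gets $\dim(G)=\dim(X)$ from Corollary \ref{cor:dim1}, sets $\mathcal E$ equal to the part of $G$ lying over $\mathcal D$, asserts $\dim(\mathcal E)<\dim(G)$ by citing \cite[Theorem 4.2, Corollary 5.3]{Fuji}, and then projects down, using Theorem \ref{thm:dim} to conclude $\dim(\mathcal D)=\dim(\pi(\mathcal E))\leq\dim(\mathcal E)<\dim(X)$. You never invoke Theorem \ref{thm:dim} or Corollary \ref{cor:dim1}; your proof runs entirely on Lemma \ref{lem:dim_pre2}, the frontier inequality \cite[Theorem 5.5]{Fuji} and local cell decomposition, via the split of $X$ into the locally bounded locus $L$ and the locally unbounded locus $N$ and the covering $N\subset\bigcup_n\partial P_n$. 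What your route buys is independence from the image-dimension theorem and an explicit treatment of the point you correctly identify as delicate: discontinuity of $f$ at $x$ is not a local condition on $G$ near $(x,f(x))$ when $f$ is locally unbounded, and the paper's one-line citation for $\dim(\mathcal E)<\dim(G)$ glosses exactly this; your $\partial P_n$ device handles it head-on. Your use of Lemma \ref{lem:dim_pre2} against a countable cover by rational boxes is precisely how that lemma is deployed in the paper's proof of Theorem \ref{thm:dim}, so the tools are used in their intended spirit. (Two harmless remarks: Lemma \ref{lem:dim_pre2} applies to any definable set of dimension $d$, so your detours through box-homeomorphic subsets $M_0$ and $N_0$ are unnecessary; and since Section \ref{sec:dim} does not assume $+$ is in the language, conditions like $|y-x|<\varepsilon$ should be phrased with order formulas and box endpoints as parameters, which is routine.)

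One step does need repair as written: \cite[Theorem 4.2]{Fuji} does not let you cell-decompose the \emph{prescribed} box $B_a\times\left]-R,R\right[$; it only produces, for each point, \emph{some} box around that point admitting a decomposition, and nothing cited gives a finite cell decomposition of a given bounded box in a merely locally o-minimal structure. The fix is the compactness device the paper itself uses repeatedly (e.g., in the proofs of Lemma \ref{lem:dim_pre2} and Lemma \ref{lem:ld2}): cover the compact set $\{a\}\times[-R,R]$ by finitely many boxes $U_1,\ldots,U_p$, each carrying a cell decomposition partitioning $G$, and take a box $B'\ni a$ with $B'\times[-R,R]\subset\bigcup_i U_i$. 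The graph cells from the several decompositions then yield finitely many definable pieces $D_s\subset X\cap B'$ which \emph{cover} rather than partition $X\cap B'$, with $f|_{D_s}$ continuous on each; so replace your ``interior of its cell'' criterion by: if $x$ avoids every frontier $\partial D_s$, then for each $s$ either a neighborhood of $x$ misses $D_s$ or $x\in D_s$ and $f|_{D_s}$ is continuous at $x$, and intersecting these finitely many neighborhoods shows $f$ is continuous at $x$. Hence $\mathcal D\cap B'\subset\bigcup_s\partial D_s$, of dimension $<d$ by \cite[Theorem 5.5, Corollary 5.4]{Fuji}. Since your countable-cover scheme only needs, around each point of $L$, \emph{some} rational box $V$ with $\dim(\mathcal D\cap V)<d$, this repaired local statement slots directly into your Lemma \ref{lem:dim_pre2} argument and the proof goes through.
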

\begin{proof}
Let $G$ be the graph of $f$.
We have $\dim(G)=\dim(X)$ by Corollary \ref{cor:dim1}.
Set $\mathcal E=\{(x,y) \in X \times \mathbb R\;|\; y=f(x) \text{ and } f \text{ is discontinuous at }x\}$.
We get $\dim(\mathcal E) < \dim(G)$ by \cite[Theorem 4.2, Corollary 5.3]{Fuji}.
Let $\pi:\mathbb R^{m+1} \rightarrow \mathbb R^m$ be the projection forgetting the last coordinate.
We have $\mathcal D = \pi(\mathcal E)$ by the definitions of $\mathcal D$ and $\mathcal E$.
We finally obtain $\dim(\mathcal D) = \dim(\pi(\mathcal E))  \leq \dim(\mathcal E) < \dim(G)=\dim(X)$ by Theorem \ref{thm:dim}.
\end{proof}

\section{Partition into multi-cells}\label{sec:multi}
A set definable in an o-minimal structure is a finite union of cells. 
See \cite{KPS,PS, vdD}.
A set definable in a definably complete uniformly locally o-minimal structure of the second kind is locally a finite union of cells by \cite[Theorem 4.2]{Fuji}, but it is not always true globally.
In this section, we define multi-cells and demonstrate that a definable set is decomposed into finite multi-cells if the structure is a locally o-minimal expansion of the group of reals $(\mathbb R, <,0,+)$. 

Fornasiero also defined multi-cells and demonstrated that a set definable in a definably complete locally o-minimal field is decomposed into finite multi-cells in \cite{F}.
We use the same term `multi-cell' in this paper.
Our definition of multi-cells is similar to but not the same as Fornasiero's.

We first define locally definable sets and investigate their basic properties.
\begin{definition}
Consider a locally o-minimal structure whose universe is the set of reals $\mathbb R$.
A \textit{locally definable subset} $X$ of $\mathbb R^n$ is a subset such that, for any  $x \in \mathbb R^n$, there exists an open box $U_x$ in $\mathbb R^n$ containing the point $x$ such that $X \cap U_x$ is definable.
A subset $X$ of $\mathbb R^{n+1}$ is called \textit{bounded in the last coordinate} if there exists a bounded open interval $I$ such that $X \subset \mathbb R^n \times I$.
\end{definition}

\begin{lemma}\label{lem:ld1}
Consider a locally o-minimal structure whose universe is the set of reals $\mathbb R$.
\begin{enumerate}
\item[(a)] A bounded locally definable set is definable.
\item[(b)] The closure of a locally definable set is locally definable.
\item[(c)] Any connected component of a locally definable set is locally definable.
\end{enumerate}
\end{lemma}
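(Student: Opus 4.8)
The plan is to handle the three parts separately, in the order (a), (b), (c); none depends on the others, and (c) is the one carrying the real content. Throughout I would use that this structure admits local definable cell decomposition (\cite[Theorem 4.2]{Fuji}) and that the closure of a definable set is again definable, which is elementary in an expansion of an ordered group since closure is expressible by a first-order formula quantifying over box-neighborhoods.

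For part (a) I would invoke compactness. Let $X \subseteq \mathbb R^n$ be locally definable and bounded. Then $\overline{X}$ is closed and bounded, hence compact. For each $x \in \overline{X}$ local definability furnishes an open box $U_x$ with $X \cap U_x$ definable, and the family $\{U_x\}_{x \in \overline{X}}$ is an open cover of the compact set $\overline{X} \supseteq X$. Extracting a finite subcover $U_{x_1}, \ldots, U_{x_N}$, we get $X = \bigcup_{i=1}^N (X \cap U_{x_i})$, a finite union of definable sets, hence definable.

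For part (b) the key point is that closure is a local operation relative to an open set: if $U$ is open and $y \in U$, then every sufficiently small neighborhood of $y$ lies in $U$, so $y \in \overline{X}$ if and only if $y \in \overline{X \cap U}$. Hence for any open box $U_x$ witnessing local definability of $X$ at $x$ one has $\overline{X} \cap U_x = \overline{X \cap U_x} \cap U_x$. The right-hand side is definable, since $X \cap U_x$ is definable, its closure is definable, and $U_x$ is an open box; therefore $\overline{X}$ is locally definable.

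Part (c) is where I expect the main obstacle. Fix $x \in \mathbb R^n$, choose an open box $U_x$ with $X \cap U_x$ definable, and let $C$ be a connected component of $X$. The aim is to show that $C \cap U_x$ is a finite union of connected components of $X \cap U_x$. Local definable cell decomposition writes $X \cap U_x$ as a finite disjoint union of cells; since each cell is definably homeomorphic to an open box and the universe is $\mathbb R$, each cell is connected, so $X \cap U_x$ has only finitely many connected components, each a union of some of these cells and hence definable. The decisive step, and the one I regard as the heart of the argument, is that any connected component $D$ of $X \cap U_x$ meeting $C$ is contained in $C$: being a connected subset of $X$, the set $D$ lies in a single connected component of $X$, which must be $C$. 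It follows that $C \cap U_x$ is the union of exactly those connected components $D$ of $X \cap U_x$ with $D \subseteq C$, a finite union of definable sets, and therefore definable. Thus the whole difficulty concentrates in establishing finiteness and definability of the local connected components via cell decomposition together with the connectedness of cells over $\mathbb R$; once this is secured, the passage from components of the local piece $X \cap U_x$ to the trace $C \cap U_x$ of the global component $C$ is purely set-theoretic.
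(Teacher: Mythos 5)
Your proposal is correct and takes essentially the same route as the paper: parts (a) and (b) are exactly the ``easy'' arguments the paper omits, and for (c) you use local definable cell decomposition together with the connectedness of cells over $\mathbb R$, which is precisely the paper's (terser) argument that $B \cap C$ is a finite union of cells. One cosmetic repair: the local cell decomposition theorem of \cite{Fuji} only decomposes a possibly smaller box $B \subseteq U_x$ around $x$ rather than $X \cap U_x$ itself, so, as the paper puts it, one should ``shrink $B$ if necessary'' and run your component argument inside $B$; since local definability merely requires some box at each point, this changes nothing.
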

\begin{proof}
The assertions (a) and (b) are easy to prove.
We prove the assertion (c).
Take an arbitrary connect component $C$ of a locally definable subset $X$ of $\mathbb R^n$.
For any point $x \in \mathbb R^n$, there exists an open box $B$ containing $x$ such that $B \cap X$ is definable.
Shrinking $B$ if necessary, we may assume that $B \cap X$ is a finite union of cells by \cite[Theorem 4.2]{Fuji}.
The set $B \cap C$ is also definable because it is a finite union of cells.
It means that $C$ is locally definable.
\end{proof}

\begin{lemma}\label{lem:ld2}
Consider a locally o-minimal structure whose universe is the set of reals $\mathbb R$.
Let $X$ be a locally definable subset of $\mathbb R^{n+1}$ which is bounded in the last coordinate.
The image $\pi(X)$ is also locally definable, where $\pi:\mathbb R^{n+1} \rightarrow \mathbb R^n$ is the projection forgetting the last coordinate.
\end{lemma}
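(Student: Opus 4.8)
The plan is to verify local definability of $\pi(X)$ one point at a time. I would fix an arbitrary $y \in \mathbb{R}^n$ and try to produce an open box $V$ in $\mathbb{R}^n$ with $y \in V$ such that $\pi(X) \cap V$ is definable; since $y$ is arbitrary, this suffices to conclude that $\pi(X)$ is locally definable.

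First I would exploit the boundedness hypothesis: choose a bounded open interval $I = \,]c,d[\,$ with $X \subseteq \mathbb{R}^n \times I$, so that its closure $\overline{I} = [c,d]$ is compact and hence the fiber $\{y\} \times \overline{I} \subseteq \mathbb{R}^{n+1}$ is compact. For each $t \in \overline{I}$, the local definability of $X$ supplies an open box $U_t \ni (y,t)$ with $X \cap U_t$ definable. These boxes form an open cover of the compact set $\{y\} \times \overline{I}$, so I would extract a finite subcover $U_1, \dots, U_N$ and set $W = \bigcup_{i=1}^N U_i$.

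Next I would apply the tube lemma to the inclusion $\{y\} \times \overline{I} \subseteq W$ with $\overline{I}$ compact, obtaining an open box $V \ni y$ in $\mathbb{R}^n$ with $V \times \overline{I} \subseteq W$. Because $X \subseteq \mathbb{R}^n \times I$, every point of $X$ lying over $V$ already lies in $V \times I \subseteq W$, whence
\begin{equation*}
X \cap (V \times \mathbb{R}) = \bigcup_{i=1}^N \big( (X \cap U_i) \cap (V \times I) \big),
\end{equation*}
a finite union of definable sets (each $X \cap U_i$ is definable by construction and $V \times I$ is a definable box), hence definable. Applying $\pi$ and using that the image of a definable set under a coordinate projection is definable (projection corresponds to existential quantification), I would conclude that $\pi(X) \cap V = \pi\big(X \cap (V \times \mathbb{R})\big)$ is definable.

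The only step that is more than bookkeeping is the passage from a finite cover of the single fiber $\{y\} \times \overline{I}$ to a cover of a whole tube $V \times \overline{I}$ over a neighborhood of $y$, which is precisely the tube lemma and relies on compactness of $\overline{I}$. This is exactly where the hypothesis \emph{bounded in the last coordinate} enters: without it the fiber is a non-compact line and no uniform neighborhood $V$ need exist, so I expect this to be the one point requiring care rather than a potential source of failure.
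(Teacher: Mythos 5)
Your proof is correct and follows essentially the same route as the paper: cover the compact fiber $\{y\}\times\overline{I}$ by finitely many boxes on which $X$ is definable, pass to a tube $V\times\overline{I}$ over a neighborhood of $y$, and project the resulting finite union of definable sets. The only cosmetic difference is that you invoke the tube lemma by name, while the paper inlines it by taking $B=\bigcap_{i=1}^m \pi(B_i)$, which is the same shrinking-to-a-common-projection argument.
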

\begin{proof}
Take an arbitrary point $x \in \mathbb R^n$.
Since $X$ is bounded in the last coordinate, there exists an open interval $I$ with $X \subset \mathbb R^n \times I$.
Since $\{x\} \times \overline{I}$ is compact, there exists a finite cover by open boxes $\{B_i\}_{i=1}^m$ of $\{x\} \times \overline{I}$ such that $B_i \cap X$ is definable for any $1 \leq i \leq m$. 
Set $B=\bigcap_{i=1}^m \pi(B_i)$ and $B_i'=B_i \cap \pi^{-1}(B)$.
We have $B \cap \pi(X)=\bigcup_{i=1}^m \pi(B'_i \cap X)$, and it is definable.
Therefore, $\pi(X)$ is locally definable.
\end{proof}

We need the following curve selection lemma:
\begin{lemma}[Curve selection lemma]\label{lem:curve}
Consider a locally o-minimal expansion of the group of reals $(\mathbb R, <,0,+)$. 
Let $X$ be a locally definable subset of $\mathbb R^n$ and $a \in \partial X$.
There exist $\varepsilon>0$ in $\mathbb R$ and a definable continuous curve $\gamma:]0,\varepsilon[ \rightarrow X$ such that $\displaystyle\lim_{t \to 0} \gamma(t)=a$.
\end{lemma}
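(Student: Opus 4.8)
The plan is to argue by induction, after normalising the situation. Since $X$ is only locally definable but we care solely about its germ at $a$, I first choose a bounded open box $B\ni a$; by Lemma \ref{lem:ld1}(a) the bounded locally definable set $X\cap B$ is definable, and $a\in\partial(X\cap B)$ because $a\in\overline X\setminus X$ and $B$ is an open neighbourhood of $a$. A curve into $X\cap B$ is a curve into $X$, so I may assume $X$ is definable and bounded; and since $\mathcal M$ expands $(\mathbb R,<,0,+)$, translation by $a$ is a definable homeomorphism, so I may further assume $a=0$. The proof then runs by induction on $d=\dim X$. When $d=0$ the set $X$ is discrete and closed near each point, so $\partial X=\emptyset$ and the statement is vacuous. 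For $d\geq 1$ I apply local definable cell decomposition \cite[Theorem 4.2]{Fuji} near $0$: finitely many cells cover $X$ there, and since $0\in\overline X$ the origin lies in $\overline C$ for some cell $C\subseteq X$, with $0\notin C$; if $\dim C<d$ the induction hypothesis applied to $C$ already finishes, so I am reduced to the case where $X=C$ is a top-dimensional cell and $0\in\partial C$.

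Writing $C$ over $D=\pi(C)\subseteq\mathbb R^{n-1}$ with $\pi$ forgetting the last coordinate, put $0=(0',0)$ where $0'=\pi(0)\in\overline D$. If $C$ is a band $]f,g[_D$ and $0'\in D$, then the fibre $C_{0'}$ is an open interval having $0$ as an endpoint (it cannot contain $0$ in its interior, else $0\in C$), and the vertical curve $\gamma(t)=(0',t)$ or $\gamma(t)=(0',-t)$ lies in $C$ and tends to $0$; here the group structure lets me name $\gamma$. In all remaining situations $0'\in\partial D$, and the natural move is to invoke the induction hypothesis for the lower-dimensional base to obtain a definable continuous $\delta\colon\,]0,\varepsilon[\to D$ with $\delta(t)\to 0'$, and then to lift it into $C$ by appending the missing coordinate: $\gamma(t)=(\delta(t),h(\delta(t)))$ in the graph case $C=\Gamma_h$, and $\gamma(t)=(\delta(t),w(t))$ with $f(\delta(t))<w(t)<g(\delta(t))$ in the band case.

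The main obstacle is exactly the convergence of the appended last coordinate. Curve selection on $D$ only guarantees $\delta(t)\to 0'$; a priori the value $h(\delta(t))$, or the boundary functions $f(\delta(t)),g(\delta(t))$, could drift to a wrong limit or oscillate, so that $\gamma(t)$ fails to tend to $0$. Two observations tame this. First, by local o-minimality in one variable each definable function $t\mapsto h(\delta(t))$, $f(\delta(t))$, $g(\delta(t))$ is eventually monotone as $t\to 0^+$ and hence has a one-sided limit in $\mathbb R\cup\{\pm\infty\}$; the problem is therefore to \emph{force} that limit to be $0$, not merely to produce one. Second, and decisively, I strengthen the inductive statement to track a continuous definable map: I prove that for a definable $D$, a continuous definable $\psi\colon D\to\mathbb R^m$, and a frontier point of the graph $\Gamma_\psi$, there is a definable curve in $\Gamma_\psi$ converging to that point. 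With this formulation the graph case is clean, because a graph over a graph collapses to a graph over $\mathbb R^{n-1}$ (the coordinate $h$ is absorbed into the tracked map $\psi$), so the appended coordinate is now controlled by the induction hypothesis itself and the ambient dimension genuinely drops.

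The band case with $0'\in\partial D$ is then handled by passing to the one-sided limit functions $\psi^{-}(x)=\lim_{s\to f(x)^+}\psi(x,s)$ and $\psi^{+}(x)=\lim_{s\to g(x)^-}\psi(x,s)$, which exist pointwise by the one-variable argument above and are definable; their graphs live over the lower-dimensional base $D$, and by Corollary \ref{cor:dim2} the loci where they are discontinuous have dimension strictly smaller, so after discarding these negligible sets one recurses on a base of smaller dimension and selects the branch ($f$ versus $g$, and the side of approach) according to how the approximating points of $C$ cluster at $0$. Choosing $w(t)$ strictly between the boundary functions when the approach is through the interior of the fibres — for instance via the definable midpoint $\tfrac12(f+g)$, available because the group is divisible by every positive integer — then produces a definable continuous curve inside $C\subseteq X$ with $\gamma(t)\to 0$, completing the induction. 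Throughout, Lemma \ref{lem:ld1} keeps the auxiliary closures, connected components, and boundary graphs locally definable.
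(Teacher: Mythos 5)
Your overall architecture (localize via Lemma \ref{lem:ld1}(a), translate $a$ to the origin, induct through a local cell decomposition, and strengthen the induction to curve selection for graphs of tracked continuous maps) is sensible, and you correctly identified that convergence of the appended coordinate is the crux. But your treatment of that crux --- the band case --- has a genuine gap. Your recipe can only produce tracked limits realized by the fibrewise one-sided limit functions $\psi^{-},\psi^{+}$ (possibly after discarding their discontinuity loci via Corollary \ref{cor:dim2}) or by a canonical selection such as the midpoint. A frontier point of $\Gamma_\psi$ can, however, carry a ``diagonal'' cluster value lying strictly between the one-sided limits, attainable only along approaches whose last coordinate is coupled to the base. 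Concretely, take the real field (o-minimal, hence a locally o-minimal expansion of $(\mathbb R,<,0,+)$, so within the lemma's scope), $D=\{(x,s)\;|\;0<x<1,\ 0<s<x\}$, $\psi(x,s)=(s/x)^2$, and $p=(0,0,1/2)\in\partial\Gamma_\psi$. Here $\psi^{-}\equiv 0$ and $\psi^{+}\equiv 1$ are continuous on $]0,1[$, so nothing is discarded; recursing on their graphs yields tracked limits $0$ or $1$, and the midpoint curve yields $1/4$ --- none equals $1/2$, although the definable curve $\gamma(t)=(t,t/\sqrt{2},1/2)$ does lie in $\Gamma_\psi$ and converge to $p$. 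The step ``selects the branch according to how the approximating points of $C$ cluster'' is therefore not a construction: your strengthened statement is exactly the lemma itself applied to the definable set $\Gamma_\psi$ at $p$, and in the band case your argument hands that problem back unreduced. (A smaller issue: ``eventually monotone as $t\to 0^{+}$ by local o-minimality'' is unjustified at a boundary point of the domain; the available substitute is $\dim\partial\Gamma<\dim\Gamma$ from \cite[Theorem 5.5]{Fuji} together with Corollary \ref{cor:dim2}, the device the paper itself uses inside the proof of Lemma \ref{lem:multi-cell-pre2}.)

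The paper sidesteps this entire difficulty. After the same localization, it follows the o-minimal proof of \cite[p.94]{vdD}, i.e.\ definable choice in an expansion of an ordered group, with Corollary \ref{cor:dim2} replacing the monotonicity theorem: since $a\in\partial X$, the definable set of sup-norm distances $\{|x-a|\;:\;x\in X\cap B\}$ accumulates at $0$ and, by local o-minimality, contains an interval $]0,\varepsilon[$; a definable choice of $\gamma(t)\in X$ with $|\gamma(t)-a|=t$ is available because the relevant fibers are bounded (so locally finite unions of cells) and midpoints are definable in the divisible group; Corollary \ref{cor:dim2} then shrinks $\varepsilon$ so that $\gamma$ is continuous. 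The decisive advantage is that $\lim_{t\to 0}\gamma(t)=a$ is built into the construction by the constraint $|\gamma(t)-a|=t$, rather than reconstructed limit-by-limit through a tower of cells --- which is precisely where your induction breaks. To salvage your approach you would need to decompose the graph in the full ambient space with the target point as a parameter (e.g.\ partitioning level sets or sublevel sets of $\psi$ through $p''$), but the band case of that refined decomposition runs into the same obstruction one level down; the definable-choice route is the standard and robust repair.
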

\begin{proof}
Take a sufficiently small open box $B$ containing the point $a$.
The set $B \cap X$ is definable.
We may assume that $X$ is definable considering $B \cap X$ in place of $X$.
The proof is the same as the o-minimal case given in \cite[p.94]{vdD} using Corollary \ref{cor:dim2} in place of the monotonicity theorem for o-minimal structures.
We omit the proof.
\end{proof}

\begin{lemma}\label{lem:o-min}
Consider a locally o-minimal structure whose universe is the set of reals $\mathbb R$.
Then, it is o-minimal or there exists an unbounded discrete definable subset of $\mathbb R$.
\end{lemma}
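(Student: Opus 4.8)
The plan is to prove the dichotomy by assuming that $\mathcal M$ is not o-minimal and constructing an unbounded discrete definable subset of $\mathbb R$ directly from a witness to this failure. For subsets of the line, o-minimality is equivalent to saying that every definable $X \subseteq \mathbb R$ is a finite union of points and open intervals, which in turn is equivalent to $X$ having only finitely many connected components. So I would fix a definable set $X \subseteq \mathbb R$ with infinitely many connected components. The candidate for the desired set is the boundary $\operatorname{bd}(X) = \overline X \setminus \operatorname{int}(X)$; it is definable because the order allows one to define $\overline X$ and $\operatorname{int}(X)$. It then remains to verify that $\operatorname{bd}(X)$ is discrete and unbounded.

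For discreteness I would invoke local o-minimality directly: for each $a \in \mathbb R$ there is an open interval $I \ni a$ with $X \cap I$ a finite union of points and open intervals, so $\operatorname{bd}(X) \cap I$ is finite. Hence $\operatorname{bd}(X)$ is locally finite, so each of its points is isolated in it and $\operatorname{bd}(X)$ is discrete (and closed). The same local charts, together with compactness, yield the auxiliary fact that $X \cap [-n,n]$ has only finitely many connected components for every $n$: cover the compact set $[-n,n]$ by finitely many such intervals $I$ and observe that $X$ restricted to each is a finite union of points and intervals.

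The crux is unboundedness, and this is the step I expect to be the main obstacle, since it is the only place where the global hypothesis of infinitely many components is used. Suppose toward a contradiction that $\operatorname{bd}(X) \subseteq [-N,N]$ for some $N$. On the ray $R_+ = \,]N,\infty[$ every point $x$ satisfies $x \notin \operatorname{bd}(X)$, hence $x \in \operatorname{int}(X)$ or $x \notin \overline X$; this exhibits $R_+$ as the disjoint union of the two relatively open sets $R_+ \cap \operatorname{int}(X)$ and $R_+ \setminus \overline X$. Since $R_+$ is connected, one of them is empty, so $X \cap R_+$ is either empty or all of $R_+$; the same holds for $]-\infty,-N[$. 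Writing $X$ as the union of $X \cap \,]-\infty,-N[$, $X \cap [-N,N]$, and $X \cap \,]N,\infty[$, and using that the number of connected components of a finite union is at most the sum of the numbers of components of the pieces, I conclude that $X$ has only finitely many connected components, contradicting the choice of $X$.

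Therefore $\operatorname{bd}(X)$ cannot be bounded, so it is an unbounded discrete definable subset of $\mathbb R$, which establishes the dichotomy. The only points demanding care are the definability of $\operatorname{bd}(X)$, which is routine from the order, and the component-counting in the last step; everything else follows formally from local o-minimality and the compactness of closed bounded intervals.
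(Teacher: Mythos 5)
Your proof is correct, and it is essentially the paper's argument read contrapositively: the paper assumes every discrete definable subset of $\mathbb R$ is bounded and deduces o-minimality by showing that the boundary $\operatorname{bd}(A)$ of an arbitrary definable $A \subseteq \mathbb R$ is discrete, hence bounded, hence finite by compactness of a closed bounded interval together with local o-minimality, so that $A$ is a finite union of points and open intervals. You use exactly the same ingredients (the definable discrete boundary as witness, compactness plus local o-minimality on $[-N,N]$, and the connectedness argument off the boundary), merely running the implication in the other direction and spelling out the component count that the paper leaves implicit.
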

\begin{proof}
We have only to show that the structure is o-minimal if every discrete definable subset of $\mathbb R$ is bounded.

Let $A$ be a definable subset of $\mathbb R$ and $A'$ be the boundary of the set $A$.
The definable set $A'$ is discrete.
It is bounded by the assumption.
Take a bounded closed interval  $I$ containing $A'$.
It is obvious that $A'$ is a finite set because $I$ is compact and the structure is locally o-minimal.
The definable set $A'$ is finite and $A$ is a finite union of points and open intervals.
It means that the structure is o-minimal.
\end{proof}

We next define multi-cells.
\begin{definition}
Consider a locally o-minimal expansion of the group of reals $(\mathbb R, <,0,+)$. 
We define a \textit{multi-cell} $X$ in $\mathbb R^n$ inductively.
\begin{itemize}
\item If $n=1$, $X$ is a discrete definable set or all connected components of the definable set $X$ are open intervals. 
\item When $n>1$, let $\pi:\mathbb R^n \rightarrow \mathbb R^{n-1}$ be the projection forgetting the last coordinate.
The projection image $\pi(X)$ is a multi-cell and, for any connected component $Y$ of $X$, $\pi(Y)$ is a connected component of $\pi(X)$ and $Y$ is one of the following forms:
\begin{align*}
Y&=\pi(Y) \times \mathbb R   \text{,}\\
Y&=\{(x,y) \in \pi(Y) \times \mathbb R \;|\; y=f(x)\} \text{,}\\
Y &= \{(x,y) \in \pi(Y) \times \mathbb R \;|\; y>f(x)\} \text{,}\\
Y &= \{(x,y) \in \pi(Y) \times \mathbb R \;|\; y<g(x)\} \text{ and }\\
Y &= \{(x,y) \in \pi(Y) \times \mathbb R \;|\; f(x)<y<g(x)\}
\end{align*}
for some continuous functions $f$ and $g$ defined on $\pi(Y)$ with $f<g$.
\end{itemize} 
\end{definition}

The proof of the main theorem in this section is long.
We divide the proof into several lemmas.
\begin{lemma}\label{lem:multi-cell-pre}
Consider a locally o-minimal expansion of the group of reals $(\mathbb R, <,0,+)$ which is not o-minimal.
Let $X$ be a definable subset of $\mathbb R^n$ and $\pi:\mathbb R^n \rightarrow \mathbb R^{n-1}$ be the projection forgetting the last coordinate.
Assume that, for any $x \in \mathbb R^{n-1}$, the fiber $X \cap \pi^{-1}(x)$ is at most of dimension zero.
Then, there exist definable subsets $Z_1$ and $Z_2$ of $\mathbb R^{n-1}$ satisfying the following conditions:
 \begin{enumerate}
\item[(a)]  $\dim(Z_1) < \dim(\pi(X))$ and $\dim(Z_2) < \dim(\pi(X))$;
\item[(b)] For any $x \in \mathbb R^n \setminus \pi^{-1}(Z_1)$, there exists an open box $U$ containing the point $x$ such that $X \cap U=\emptyset$ or $\pi(X) \cap \pi(U)$ is a manifold and $X \cap U$ is the graph of a continuous function defined on $\pi(X) \cap \pi(U)$;
\item[(c)] Any connected component $C$ of $X \setminus \pi^{-1}(Z_1 \cup Z_2)$ is bounded in the last coordinate;
\item[(d)] $\partial C \subset \pi^{-1}(Z_1 \cup Z_2)$ for any connected component $C$ of $X \setminus \pi^{-1}(Z_1 \cup Z_2)$.
\end{enumerate}
\end{lemma}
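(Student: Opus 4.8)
Set $d = \dim \pi(X)$. Since every fiber of $\pi|_X$ has dimension $\le 0$, Corollary \ref{cor:dim1} gives $\dim X = d$; consequently any locus I remove must be arranged to have dimension $<d$. The plan is to build $Z_1$ to govern the local graph structure (condition (b)), to build $Z_2$ to govern unboundedness in the last coordinate and frontiers (conditions (c) and (d)), and then to verify the dimension bounds (condition (a)) by repeatedly invoking Theorem \ref{thm:dim} together with the dimension calculus of \cite[Section 5]{Fuji}.

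\textbf{Construction of $Z_1$.} First I would apply local definable cell decomposition \cite[Theorem 4.2]{Fuji} at every point: around each $x \in \mathbb R^n$ there is an open box in which $X$ is a finite union of cells, and because the fibers are at most $0$-dimensional, every cell meeting $X$ is the graph of a continuous function over an $(n-1)$-cell rather than a band. Let $\Sigma \subseteq X$ be the set of points at which $X$ fails to be, locally, the graph of a single continuous function over a $d$-dimensional manifold piece of $\pi(X)$ (the non-manifold points of $X$, the points lying over non-manifold points of $\pi(X)$, and the points where two sheets meet). Local cell decomposition shows $\dim \Sigma < d$ inside each box, and the supremum formula \cite[Corollary 5.3]{Fuji} upgrades this to $\dim \Sigma < d$ globally. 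I then set $Z_1$ to be the union of $\pi(\Sigma)$, the projection $\pi(\partial X)$ of the frontier of $X$, and the non-manifold locus of $\pi(X)$. Each piece has dimension $<d$: for $\pi(\Sigma)$ and $\pi(\partial X)$ this is Theorem \ref{thm:dim} (using $\dim \partial X < \dim X$, again from \cite[Theorem 4.2, Corollary 5.3]{Fuji}), while the non-manifold locus of $\pi(X)$ has dimension $<d$ directly. Hence $\dim Z_1 < d$ by \cite[Corollary 5.4]{Fuji}, which is (a) for $Z_1$. Condition (b) is then read off from the construction: including $\pi(\partial X)$ in $Z_1$ excludes frontier base points, and for $x \in X$ with $\pi(x) \notin Z_1$ a sufficiently small box isolates a single sheet because the fiber is discrete, so over $\pi(X)\cap\pi(U)$ this sheet is the graph of a continuous function.

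\textbf{Construction of $Z_2$ and conditions (c), (d).} This is the heart of the matter and the step I expect to be the main obstacle. I would define $Z_2$ to be the locus of base points over which a single connected sheet of $X$ escapes to $\pm\infty$ in the last coordinate, made definable by a formula such as $Z_2 = \{y : \forall N>0,\ y \in \overline{\pi(X \cap (\mathbb R^{n-1}\times(\mathbb R\setminus[-N,N])))}\}$, refined by means of the graph structure above so that the \emph{benign} situation of infinitely many bounded sheets stacked in a single fiber (as in $\mathbb R^{n-1}\times D$ for an unbounded discrete definable $D$, which exists by Lemma \ref{lem:o-min}) does not inflate $Z_2$. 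The curve selection lemma (Lemma \ref{lem:curve}) supplies the usable characterisation: $y \in Z_2$ exactly when there is a definable continuous curve in $X$ whose base tends to $y$ while its last coordinate tends to infinity; since such a curve has connected image it lies in a single component and cannot jump between stacked bounded sheets. Condition (c) then follows by contradiction: an unbounded component $C$ of $X \setminus \pi^{-1}(Z_1\cup Z_2)$ would yield such a curve with base-limit in $Z_2$, contradicting $\pi(C)\cap Z_2=\emptyset$. For (d), a frontier point of $C$ whose base avoids $Z_1\cup Z_2$ would, by the graph description in (b), be an interior continuation of the same sheet, contradicting maximality of the connected component; hence $\partial C \subseteq \pi^{-1}(Z_1\cup Z_2)$.

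\textbf{The hard part.} The two delicate points are proving $\dim Z_2 < d$ and ensuring $Z_2$ captures every single-sheet blow-up without flagging benign unbounded fibers. For the dimension bound I expect to combine the curve-selection description with Theorem \ref{thm:dim}, estimating $\dim Z_2$ by the dimension of an auxiliary definable subset of $X$ that is forced to have dimension $<d$. The crucial structural input will be that, in a locally o-minimal expansion of the ordered group of reals that is not o-minimal, a single continuous definable sheet cannot run off to infinity over a $d$-dimensional base; pinning this down—so that the escape locus is genuinely of dimension $<d$—is where the interaction between the group structure, local o-minimality, and the dimension theory of Section \ref{sec:dim} must be exploited most carefully.
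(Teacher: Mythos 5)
Your construction of $Z_1$ and your sketch of (b) and (d) track the paper's proof closely (the paper lumps $\partial X$ into $Z_2$ rather than $Z_1$ and proves (d) by a local cell decomposition at the offending frontier point, but your sheet-continuation argument is the same mechanism). The genuine gap is exactly where you flagged it: $Z_2$. Your explicit candidate $Z_2=\{y\;|\;\forall N>0,\ y\in\overline{\pi(X\cap(\mathbb R^{n-1}\times(\mathbb R\setminus[-N,N])))}\}$ fails condition (a) outright: for $X=\mathbb R^{n-1}\times D$ with $D$ an unbounded discrete definable set (which exists by Lemma \ref{lem:o-min}), every fiber is unbounded, so this $Z_2$ is all of $\mathbb R^{n-1}$ and has dimension $d$, even though every connected component of $X$ is already bounded in the last coordinate. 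The proposed repair via Lemma \ref{lem:curve} --- ``$y\in Z_2$ iff some definable continuous curve in $X$ has base tending to $y$ and last coordinate tending to infinity'' --- is not a first-order condition (it quantifies over definable curves), so it does not produce a definable set; and the ``crucial structural input'' you defer, that the single-sheet escape locus has dimension $<d$, is not available at this stage: you do not yet know that connected components are graphs over reasonable sets, since that is exactly what Lemma \ref{lem:multi-cell-pre2} later proves \emph{using} conditions (c) and (d). Deferring the escape-locus bound to ``the interaction between the group structure, local o-minimality, and Section 2'' is therefore close to circular, and no argument is offered.

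The paper sidesteps detecting escape to infinity entirely, and this is the one place the non-o-minimality hypothesis genuinely enters. Take $D\subset\mathbb R$ unbounded, discrete and definable, replaced by $D\cup(-D)$ so that $\inf D=-\infty$ and $\sup D=+\infty$; for each $r\in D$ let $Z_r$ be the boundary of the slice $X\cap(\pi(X)\times\{r\})$ inside $\pi(X)\times\{r\}$, and set $Z_2=\pi\bigl(\partial X\cup\bigcup_{r\in D}Z_r\bigr)$, which is definable because the union over $r\in D$ is defined uniformly in $r$. The dimension bound then comes cheaply: each $Z_r$ has dimension $<\dim X$ by \cite[Theorem 5.5]{Fuji}, any cell inside $\bigcup_{r\in D}Z_r$ lies in a single horizontal slice by connectedness, and \cite[Corollary 5.3]{Fuji} together with Theorem \ref{thm:dim} gives (a). For (c), one checks that over base points outside $Z_2$ the set $X$ meets each hyperplane $\mathbb R^{n-1}\times\{r\}$, $r\in D$, in a relatively open and closed set; hence a connected component $C$ of $X\setminus\pi^{-1}(Z_1\cup Z_2)$ either lies inside a single level $\{y=r\}$ (so is bounded), or misses all levels, in which case unboundedness of $\pi_2(C)$ would let some $R\in D$ separate $C$ into the two nonempty clopen pieces $\{\pi_2>R\}$ and $\{\pi_2<R\}$, contradicting connectedness; since $D$ is unbounded in both directions, $C$ is trapped between consecutive barrier heights. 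If you want to salvage your route, the fix is to use this ladder $D$ as the \emph{definition} of $Z_2$ rather than treating $\mathbb R^{n-1}\times D$ merely as a benign example to be excluded.
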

\begin{proof}
We first find a definable subset $Z_1$ of $\mathbb R^{n-1}$ with $\dim(Z_1) < \dim(\pi(X))$ satisfying the condition (b).
Set $d=\dim(X)$.
We have $\dim(\pi(X))=d$ by Corollary \ref{cor:dim1}.
The notation $\operatorname{Reg}(\pi(X))$ denotes the set of points at which $\pi(X)$ is locally a $d$-dimensional manifold.
It is open in $\pi(X)$.
The notation $\operatorname{Sing}(\pi(X))$ denotes the singular locus given by $\pi(X) \setminus \operatorname{Reg}(\pi(X))$.
It is a definable set of dimension smaller than $d$ by \cite[Theorem 4.2]{Fuji}.
Let $S$ be the set of points $x$ in $\mathbb R^n \setminus \pi^{-1}(\operatorname{Sing}(\pi(X)))$ at which there exist no open boxes $U$ containing the point $x$ such that $X \cap U=\emptyset$ or $X \cap U$ is the graph of a continuous function defined on $\pi(X) \cap \pi(U)$.
We have $\dim(S) < d$ by \cite[Theorem 4.2]{Fuji}.
Set $Z_1 = \overline{\pi(S)} \cup \operatorname{Sing}(\pi(X))$.
We have $\dim Z_1 \leq \max\{\dim(S), \operatorname{Sing}(\pi(X))\}< \dim(X)$ by \cite[Theorem 5.5]{Fuji} and Theorem \ref{thm:dim}.
The condition (b) is obviously satisfied.
We may assume that $Z_1=\emptyset$ by considering $X \setminus \pi^{-1}(Z_1)$ in place of $X$.

There exists an unbounded discrete definable subset $D$ of $\mathbb R$ by Lemma \ref{lem:o-min}.
We may assume that $\inf(D)=-\infty$ and $\sup(D)=\infty$ by considering $D \cup (-D)$ in place of $D$ because the group operation is definable.
Let $Z_r$ be the boundary of $X \cap (\pi(X) \times \{r\})$ in $\pi(X) \times \{r\}$ for any $r \in D$.
Set $W=\partial X \cup \left( \displaystyle\bigcup_{r \in D} Z_r \right)$ and $Z_2=\pi(W)$.
The set $W$ is definable.
In fact, $\displaystyle\bigcup_{r \in D} Z_r$ is given by $\{(x,r) \in \mathbb R^{n-1} \times \mathbb R\;|\; r \in D, \ \text{the point }x \text{ is contained in the boundary of } X \cap \pi^{-1}(r) \text{ in }\pi^{-1}(r)\}$ and it is definable.
We prove the assertion (a) for $Z_2$.
Let $C$ be a definable cell contained in $\displaystyle\bigcup_{r \in D} Z_r$. 
There exists $r \in D$ with $C \subset Z_r$.
We have $\dim (C) < \dim (X \cap (\pi(X) \times \{r\})) \leq \dim(X)$ and $\dim(\partial X)<\dim(X)$ by \cite[Theorem 5.5]{Fuji}.
We also have $\dim\left(\displaystyle\bigcup_{r \in D} Z_r\right)<\dim(X)$ by \cite[Corollary 5.3]{Fuji}.
We finally get the assertion (a) by Corollary \ref{cor:dim1} and \cite[Corollary 5.4(ii)]{Fuji}.

Furthermore, the following assertion $(*)$ holds true because $X \cap \left((\pi(X) \setminus Z_2) \times D\right)$ is locally the graph of a constant function on $\pi(X) \cap \pi(U)$.
\medskip

$(*)$: There exists an open neighborhood of $X \cap \left((\pi(X) \setminus Z_2) \times D\right)$ in $(\pi(X) \setminus Z_2) \times D$ which does not intersect with $X \setminus \pi^{-1}(Z_2)$ other than $X \cap \left((\pi(X) \setminus Z_2) \times D\right)$.
\medskip

We next show that the definable set $Z_2$ satisfies the assertions (c) and (d).
We consider two cases, separately.
We first consider the case in which $C \cap ((\pi(X) \setminus Z_2) \times D)$ has a non-empty interior in $(\pi(X) \setminus Z_2) \times D$.
We have $C \subset (\pi(X) \setminus Z_2) \times \{r\}$ for some $r \in D$.
In fact, the definable set $X \cap ((\pi(X) \setminus Z_2) \times \{r\}) $ is open and closed in $(\pi(X) \setminus Z_2) \times \{r\}$ by the definition of $Z_2$ for any $r \in D$.
We have $C \subset \mathbb R^{n-1} \times \{r\}$ for some $r \in D$ because $C$ is connected.
In particular, $C$ is bounded on the last coordinate and satisfies the assertion (c).
The assertion (d) is also trivial in this case because $C$ is closed and open subset of $X \cap ((\pi(X) \setminus Z_2) \times \{r\}) $ in this case.

The next case is the case in which $C \cap ((\pi(X) \setminus Z_2) \times D)$ has an empty interior in $\mathbb R^{n-1} \times D$.
We have $C \cap ((\pi(X) \setminus Z_2) \times D)=\emptyset$ by the assertion $(*)$ in this case.
We demonstrate that $C \subset \mathbb R^{n-1} \times ]r_1,r_2[$ for some $r_1,r_2 \in \mathbb R$.
Let $\pi_2:\mathbb R^n \rightarrow \mathbb R$ be the projection onto the last coordinate.
Set $s = \inf(\pi_2(C))$.
Assume that $s=-\infty$.
Take a point $x_1 \in C$, then there exists an $R \in D$ with $\pi_2(x_1) > R$ because $\inf(D)=-\infty$.
We can get $x_2 \in C$ with $\pi_2(x_2)<R$ because $s=-\infty$.
The partition $C = \{x \in C\;|\; \pi_2(x)>R\} \cup \{x \in C\;|\; \pi_2(x)<R\}$ is a partition into two non-empty open and closed subsets.
It contradicts the assumption that $C$ is connected.
Therefore, we have $\inf(\pi_2(C)) > -\infty$.
We can show that $\sup(\pi_2(C)) < + \infty$ in the same way.
We can take $r_1, r_2 \in \mathbb R$  with $r_1 < \inf(\pi_2(C)) \leq \sup(\pi_2(C)) < r_2$.
We have finished the proof of the assertion (c).

We finally demonstrate the assertion (d).
Assume the contrary.
Take an arbitrary point $x \in \partial C \setminus \pi^{-1}(Z_2)$.
We have $x \not\in \mathbb R^{n-1} \times D$ by the assertion $(*)$.
Set $X'=X\setminus \pi^{-1}(Z_2)$.
The point $x$ is contained in $X'$ because $X'$ is closed in $\mathbb R^n \setminus \pi^{-1}(Z_2)$ by the definition of $W$ and $Z_2$.
There exists an open box $B$ containing the point $x$ and a definable cell decomposition of $B$ partitioning $X \cap B$, $D \cap B$, $\pi^{-1}(Z_2)$ and $W$ by \cite[Theorem 4.2]{Fuji}.
Since $C$ is a connected component of $X'$, $C \cap B$ is a finite union of cells.
Let $C_1$ be a cell contained in $C$ with $x \in \overline{C_1}$.
Since $C$ is closed in $X'$, we have $x \in \overline{C_1} \cap X' \subset C$.
Contradiction to the assumption that $x \in \partial C$.
\end{proof}

The following lemma is the major induction step of the proof of the main theorem.

\begin{lemma}\label{lem:multi-cell-pre2}
Consider a locally o-minimal expansion of the group of reals $(\mathbb R, <,0,+)$ which is not o-minimal.
Let $X$ be a definable subset of $\mathbb R^n$ and $\pi:\mathbb R^n \rightarrow \mathbb R^{n-1}$ be the projection forgetting the last coordinate.
Assume that, for any $x \in \mathbb R^{n-1}$, the fiber $X \cap \pi^{-1}(x)$ is at most of dimension zero.
Assume further that any definable subset of $\mathbb R^{n-1}$ is partitioned into finite multi-cells.
Then, the definable set $X$ is also partitioned into finite multi-cells.
Furthermore, the projection images of two distinct multi-cells are disjoint.
\end{lemma}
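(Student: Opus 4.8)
The plan is to run a secondary induction on $d=\dim(\pi(X))$, keeping throughout the given (outer) induction hypothesis that every definable subset of $\mathbb{R}^{n-1}$ is a finite union of multi-cells. In the step I apply Lemma~\ref{lem:multi-cell-pre} to obtain definable sets $Z_1,Z_2\subseteq\mathbb{R}^{n-1}$ with $\dim(Z_i)<d$, set $Z=Z_1\cup Z_2$, and split $X=X''\cup X'$ with $X''=X\cap\pi^{-1}(Z)$ and $X'=X\setminus\pi^{-1}(Z)$. When $d=0$ the sets $Z_i$ are empty, so $X''=\emptyset$ and no recursive call is made; this makes the secondary induction well founded, since $\dim(\pi(X''))\le\dim(Z)<d$ forces the recursion to terminate. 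The piece $X''$ again satisfies the hypotheses of the lemma (its fibers are subsets of those of $X$, hence of dimension $\le 0$), so the secondary induction hypothesis decomposes it into finitely many multi-cells with pairwise disjoint projections, all contained in $Z$.

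The substance lies in treating $X'$. Its image is $\pi(X')=\pi(X)\setminus Z$, which is definable (the image $\pi(X)$ is definable in the present setting, exactly as already used in Lemma~\ref{lem:multi-cell-pre}), so the outer induction hypothesis partitions it into finitely many multi-cells $W_1,\dots,W_p$ with pairwise disjoint projections. I fix $W_j$ and a connected component $P$ of $W_j$. A straightforward induction on $n$ from the definition shows that every connected component of a multi-cell is definably homeomorphic to an open box; in particular $P$ is simply connected. The claim is then that every connected component $S$ of $X'\cap\pi^{-1}(P)$ is the graph of a continuous function on $P$.

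To prove the claim I combine the three conclusions of Lemma~\ref{lem:multi-cell-pre}. By part (b) the restriction $\pi|_S$ is a local homeomorphism onto an open subset of $P$. Boundedness in the last coordinate (part (c)) together with the fact that $X$ is closed in $\mathbb{R}^n\setminus\pi^{-1}(Z)$ (which follows from the construction of $Z$, cf.\ part (d), since $\partial X\subseteq\pi^{-1}(Z)$) makes $\pi|_S\colon S\to P$ a \emph{proper} map: the preimage of a compact $K\subseteq P$ is closed and bounded in $\mathbb{R}^n$, hence compact. A proper local homeomorphism onto the connected set $P$ is a finite covering map, and since $P$ is simply connected and $S$ is connected this covering is trivial, so $\pi|_S$ is a homeomorphism and $S$ is a graph over $P$. (There may be infinitely many such sheets $S$ over $P$; this is harmless, as a multi-cell is allowed infinitely many connected components.) Consequently $\pi(X'\cap\pi^{-1}(W_j))=W_j$ and each connected component of $X'\cap\pi^{-1}(W_j)$ is a graph over a connected component of $W_j$, so $X'\cap\pi^{-1}(W_j)$ is a multi-cell; hence $X'=\bigcup_{j}\bigl(X'\cap\pi^{-1}(W_j)\bigr)$ is a finite union of multi-cells with pairwise disjoint projections $W_j\subseteq\pi(X)\setminus Z$.

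Combining the decompositions of $X''$ and $X'$ yields the desired finite partition of $X$ into multi-cells: the projections arising from $X''$ lie in $Z$ while those from $X'$ lie in $\pi(X)\setminus Z$, and each of the two families is already internally disjoint, so the whole family has pairwise disjoint projections. I expect the main obstacle to be precisely the passage performed in the third paragraph: upgrading the purely \emph{local} graph description of Lemma~\ref{lem:multi-cell-pre}(b) to a single-valued graph over the whole simply connected base component $P$. This is where properness—distilled from the boundedness and frontier control of parts (c) and (d)—is indispensable, since without it the local sheets could wind or fail to close up, and neither the graph form nor the disjointness of projections would survive.
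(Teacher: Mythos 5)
Your proposal is correct, and while it follows the paper's skeleton in the reduction step, it replaces the paper's core argument with a genuinely different one. Both proofs apply Lemma~\ref{lem:multi-cell-pre}, split off $X\cap\pi^{-1}(Z_1\cup Z_2)$ by an induction on dimension (your induction on $\dim\pi(X)$ is the same as the paper's induction on $\dim X$, since the two agree by Corollary~\ref{cor:dim1}), and use the hypothesis on $\mathbb R^{n-1}$ to arrange that the base is a multi-cell. The divergence is in showing that each connected component over a base component $P$ is a single-valued graph. The paper works entirely inside the definable category: it uses the curve selection lemma (Lemma~\ref{lem:curve}) and Corollary~\ref{cor:dim2} to analyze the fiberwise endpoint functions $\eta_u$, $\eta_l$ and $f_u=\sup$, shows the multi-valued locus $T$ is open and closed in $\pi(C)$, and, if $T=\pi(C)$, exhibits the graph of $f_u$ as a proper clopen subset of the connected set $C$; the same machinery proves $\pi(C)$ is a full component of the base. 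You instead distill (b)--(d) of Lemma~\ref{lem:multi-cell-pre} into the statement that $\pi|_S$ is a proper local homeomorphism onto $P$ and quote covering-space theory: a proper local homeomorphism onto a connected, locally compact, locally path-connected space is a finite covering, and over a simply connected base a connected covering is trivial. This is sound: properness does follow from closedness of $S$ in $\pi^{-1}(P)$ (part (d) applied to the component of $X'$ containing $S$, plus maximality of $S$) together with boundedness in the last coordinate (part (c)), and components of multi-cells do have the required topological properties. Your route is shorter and conceptually cleaner at the cost of importing standard but nontrivial topology; the paper's route is longer but self-contained within the curve-selection and dimension tools it has already developed. Your handling of the disjointness of projections and the well-foundedness of the secondary induction (at $d=0$ one has $Z_1=Z_2=\emptyset$) is also correct.

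Two small overclaims should be repaired, though neither damages the argument. First, components of a multi-cell are not in general ``definably homeomorphic to an open box'': a discrete multi-cell has point components, and rescaling a band $\{(x,y)\;|\;f(x)<y<g(x)\}$ onto a product uses division, which need not be definable in an expansion of $(\mathbb R,<,0,+)$. What you actually need --- and what the straightforward induction does give --- is a purely topological homeomorphism with a finite product of points and intervals, whence simple connectedness, local path-connectedness and local compactness of $P$. Second, the closedness of $X'$ in $\mathbb R^n\setminus\pi^{-1}(Z)$ is not literally among conclusions (a)--(d): part (d) controls only the frontier of each individual component, and an infinite union of components can acquire extra limit points. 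Either cite the construction inside the proof of Lemma~\ref{lem:multi-cell-pre} (there $\partial X\subset W$ and $Z_2=\pi(W)$, so $\partial X\subset\pi^{-1}(Z_2)$), or observe that the componentwise statement (d) already yields the closedness of $S$ in $\pi^{-1}(P)$ that your properness argument actually uses.
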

\begin{proof}
We prove the lemma by the induction on $\dim(X)$.
When $\dim(X)=0$, $X$ is a discrete definable set and its projection images are also discrete by Theorem \ref{thm:dim}.
Therefore, X itself is a multi-cell.
We consider the case in which $\dim(X)>0$.
We can find definable subsets $Z_1$ and $Z_2$ of $\mathbb R^{n-1}$ satisfying the following conditions by Lemma \ref{lem:multi-cell-pre}.
 \begin{enumerate}
\item[(a)]  $\dim(Z_1) < \dim(\pi(X))$ and $\dim(Z_2) < \dim(\pi(X))$;
\item[(b)] For any $x \in \mathbb R^n \setminus \pi^{-1}(Z_1)$, there exists an open box $U$ containing the point $x$ such that $X \cap U=\emptyset$ or $\pi(X) \cap \pi(U)$ is a manifold and $X \cap U$ is the graph of a continuous function defined on $\pi(X) \cap \pi(U)$;
\item[(c)] Any connected component $C$ of $X \setminus \pi^{-1}(Z_1 \cup Z_2)$ is bounded in the last coordinate;
\item[(d)] $\partial C \subset \pi^{-1}(Z_1 \cup Z_2)$ for any connected component $C$ of $X \setminus \pi^{-1}(Z_1 \cup Z_2)$.
\end{enumerate}

The lemma holds true for $X \cap \pi^{-1}(Z_1 \cup Z_2)$ by the induction hypothesis because $\dim(X \cap \pi^{-1}(Z_1 \cup Z_2)) =\dim(Z_1 \cup Z_2) < \dim(\pi(X))=\dim(X)$ by Corollary \ref{cor:dim1}.
Replacing $X$ with $X \setminus \pi^{-1}(Z_1 \cup Z_2)$, we may further assume that any connected component of $X$ is bounded in the last coordinate and closed in $\pi^{-1}(\pi(X))$.
We can partition $\pi(X)$ into finite multi-cells by the assumption.
Hence, we may assume that $\pi(X)$ is a multi-cell.
We demonstrate that $X$ is a multi-cell in this case.
Let $C$ be a connected component of $X$.
We have only to show the following assertions:
\begin{itemize}
\item $\pi(C)$ is a connected component of $\pi(X)$.
\item $C$ is the graph of a continuous function defined on $\pi(C)$.
\end{itemize}

We first demonstrate that $\pi(C)$ is a connected component of $\pi(X)$.
The connected component $C$ is locally definable by Lemma \ref{lem:ld1}.
The image $\pi(C)$ is locally definable by Lemma \ref{lem:ld2} because $C$ is bounded in the last coordinate.
There exists a connected component $E$ of $\pi(X)$ with $\pi(C) \subset E$ because $\pi(C)$ is connected.
Assume that $\pi(C) \not=E$.
Take a point $x$ in the boundary of $\pi(C)$ in $E$.
There exists a definable continuous curve $\gamma:]0,\varepsilon[ \rightarrow \pi(C)$ with $\displaystyle\lim_{t \to 0}\gamma(t) = x$ by Lemma \ref{lem:curve}.
Define $f_u:]0,\varepsilon[ \rightarrow \mathbb R$ by $f_u(t)=\sup\{y \in \mathbb R\;|\; (\gamma(t),y) \in C\}$.
The definable set $\{(t,y) \in ]0,\varepsilon[ \times \mathbb R\;|\; (\gamma(t),y) \in C\}$ is definable  by Lemma \ref{lem:ld1}(a) because $C$ is bounded in the last coordinate.
Therefore, the function $f_u$ is definable.
We may assume that $f_u$ is continuous by Corollary \ref{cor:dim2} by taking a sufficiently small $\varepsilon>0$ if necessary.
The limit $y=\displaystyle\lim_{t \to 0} f_u(t)$ exists.
In fact, the frontier of the graph is of dimension zero by \cite[Theorem 5.5]{Fuji}.
The intersection of the frontier with the line $t=0$ is a singleton because the function $f_u$ is continuous.
It means that the limit exists.
We have $(x,y) \in C$ because $C$ is closed in $X \cap \pi^{-1}(E)$.
By the assumption, there exists an open box $U$ with $(x,y) \in U$ such that $U \cap C$ is the graph of a continuous function defined on $E \cap \pi(U)$.
Therefore, the image $\pi(C)$ contains the neighborhood $E \cap \pi(U)$ of the point $x$.
Contradiction to the assumption that $x$ is a point in the boundary of $\pi(C)$ in $E$.

We next demonstrate that $C$ is the graph of a continuous function defined on $\pi(C)$. 
We have only to show that the restriction of $\pi$ to $C$ is injective because $X$ is locally the graph of a continuous function by the assumption.
Set 
\begin{equation*}
T=\{x \in \pi(C)\;|\; |\pi^{-1}(x) \cap C| > 1\}\text{.}
\end{equation*}
We have only to demonstrate that $T$ is an empty set.
We first show that $T$ is locally definable. 
Consider the set $S=\{(x,y_1,y_2) \in \mathbb R^{n-1} \times \mathbb R \times \mathbb R\;|\; (x,y_1) \in C \text{, } (x,y_2) \in C \text{ and } y_1 < y_2\}$. 
The locally definable set $S$ is bounded in the last coordinate, and the image $S'$ of $S$ under the projection forgetting the last coordinate is locally definable by Lemma \ref{lem:ld2}.
It is obvious that $S'$ is also bounded in the last coordinate and $T=\pi(S')$.
The set $T$ is locally definable using Lemma \ref{lem:ld2} again.

The set $T$ is open in $\pi(C)$.
In fact, take an arbitrary point $x \in T$.
There exist $y_1<y_2 \in \mathbb R$ with $(x,y_1),(x,y_2) \in C$.
Since $X$ is locally the graph of a continuous function, there exists an open box $B$ with $x \in B \cap \pi(C)$ such that $X \cap \pi^{-1}(B)$ contains the graphs of continuous functions whose values at $x$ are $y_1$ and $y_2$, respectively.
Therefore, $B \cap \pi(C)$ is contained in $T$, and $T$ is open in $\pi(C)$.

We next show that $T$ is closed in $\pi(C)$.
Assume the contrary.
Take a point $x \in \pi(C) \cap \partial T$.
We can take the unique $y \in \mathbb R$ with $(x,y) \in C$ because $x \not\in T$.
There exists a definable continuous curve $\gamma:]0,\varepsilon[ \rightarrow \pi(C) \cap T$ such that $\displaystyle\lim_{t \to 0}\gamma(t)=x$ by Lemma \ref{lem:curve}.
We define the maps $\eta_u,\eta_l:]0,\varepsilon[ \rightarrow \mathbb R$ by
\begin{align*}
&\eta_u(t)=\sup\{u \in \mathbb R\;|\; (\gamma(t),u) \in C\} \text{ and }\\
&\eta_l(t)=\inf\{u \in \mathbb R\;|\; (\gamma(t),u) \in C\} \text{.}
\end{align*}
They are well-defined because $C$ is bounded in the last coordinate.
Take a sufficiently small $\varepsilon>0$.
They are definable and continuous and they have the limits $y_u = \displaystyle\lim_{t \to 0} \eta_u(t) \in \mathbb R$ and $y_l = \displaystyle\lim_{t \to 0} \eta_l(t) \in \mathbb R$ for the same reason as above.
We have $\eta_u(t) \not=\eta_l(t)$ because $\gamma(t) \in T$.
We have $(x,y_u) \in C$ and $(x, y_l) \in C$ because $C$ is closed in $\pi^{-1}(\pi(C))$.
We therefore get $y=y_l=y_u$ because $x \not\in T$.
The definable set $X$ is not locally the graph of a definable function at $(x,y)$ because $\eta_u(t) \not=\eta_l(t)$.
Contradiction.
We have shown that $T$ is closed in $\pi(C)$.

Since $\pi(C)$ is connected and $T$ is open and closed in $\pi(C)$, we have $T=\pi(C)$ or $T=\emptyset$.
We have only to lead to a contradiction assuming that $T=\pi(C)$.
Define the function $f_u:\pi(C) \rightarrow \mathbb R$ by $f_u(x)=\sup\{t\;|\; (x,t) \in C\}$.
We can easily show that its graph is a locally definable set using Lemma \ref{lem:ld1}(a) because $C$ is bounded in the last coordinate.
It is a continuous function.
In fact, let $\mathcal D$ be the set of all the points at which $f_u$ is discontinuous.
Take a point $x \in \mathcal D$.
The set $V_x$ is the intersection of $\pi^{-1}(x)$ with the closure of the graph of $f_u|_{\pi(C) \setminus \{x\}}$, where $f_u|_{\pi(C) \setminus \{x\}}$ denote the restriction of $f_u$ to $\pi(C) \setminus \{x\}$.
The closure of the graph of $f_u|_{\pi(C) \setminus \{x\}}$ is locally definable by Lemma \ref{lem:ld1}(b).
The set $V_x$ is locally definable and compact.
Consequently, $V_x$ is definable by Lemma \ref{lem:ld1}(a).
There exists a point $(x,y) \in V_x$ with  $y \not= f_u(x)$ by the assumption.
Note that $(x,y) \in C$ because $C$ is closed in $\pi^{-1}(\pi(C))$.
The set $C$ is locally the graphs of continuous functions $g$ and $h$ defined on a neighborhood of $x$ in $\pi(X)$ at $(x,y)$ and $(x,f_u(x))$, respectively. 
Take a sufficiently small $\varepsilon >0$. 
Since $g$ and $h$ are continuous and $g(x)<h(x)$, we have $g(x')+\varepsilon<h(x')$ if $x'$ is sufficiently close to $x$.
We also get $h(x') \leq f_u(x')$ by the definition of the function $f_u$. 
We then have $g(x')+\varepsilon < f_u(x')$ for any $x'$ sufficiently close to $x$ and 
we obtain $(x,y)=(x,g(x)) \not\in V_x$.
Contradiction.
We have demonstrated that the function $f_u$ is continuous.
Consider the graph $\{(x,y) \in C\;|\; y = f_u(x)\}$.
It is easy to prove that the graph is an open and closed proper subset of $C$ using the fact that $C$ is locally the graph of a continuous function.
Contradiction to the assumption that $C$ is connected.
\end{proof}

The following theorem is the main theorem in this section.
\begin{theorem}\label{thm:multi-cell}
Consider a locally o-minimal expansion of the group of reals $(\mathbb R, <,0,+)$.
A definable set is partitioned into finite multi-cells.
\end{theorem}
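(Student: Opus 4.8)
The plan is to reduce the general case to the thin-fibered case already settled in Lemma \ref{lem:multi-cell-pre2}. First I would dispose of the o-minimal case: by Lemma \ref{lem:o-min} the structure is either o-minimal or admits an unbounded discrete definable subset of $\mathbb R$. If it is o-minimal, the classical cell decomposition theorem \cite{vdD} writes any definable set as a finite union of cells, and each o-minimal cell is a single connected multi-cell, so there is nothing more to prove. Hence I may assume the structure is not o-minimal, which is exactly the standing hypothesis of Lemma \ref{lem:multi-cell-pre2}.

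I would then argue by induction on $n$, where $X \subseteq \mathbb R^n$. For $n=1$, partition $X$ into $\operatorname{int}(X)$ and $X \setminus \operatorname{int}(X)$: the former is open, so all its connected components are open intervals, and the latter is contained in $\operatorname{bd}(X)$, which is discrete by local o-minimality; both pieces are multi-cells. Assume now that $n>1$ and that the theorem holds for every definable subset of $\mathbb R^{n-1}$. Let $\pi \colon \mathbb R^n \to \mathbb R^{n-1}$ forget the last coordinate and identify each fiber $X_x = \pi^{-1}(x) \cap X$ with a subset of $\mathbb R$. Split $X = X_0 \sqcup X_1$, where $X_1 = \{(x,y) \in X : y \in \operatorname{int}(X_x)\}$ collects the thick fibers and $X_0 = X \setminus X_1$ the thin part. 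Both are definable, and every fiber of $X_0$ is discrete, hence of dimension $\leq 0$; thus Lemma \ref{lem:multi-cell-pre2} together with the induction hypothesis partitions $X_0$ into finitely many multi-cells.

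The remaining work is to partition the open-fibered set $X_1$. Here I would consider the fiberwise boundary $E = \{(x,y) : y \in \operatorname{bd}((X_1)_x)\}$, which is definable with discrete (hence dimension $\leq 0$) fibers. Applying Lemma \ref{lem:multi-cell-pre2} to $E$ yields a partition into finitely many multi-cells whose projections are pairwise disjoint; since the fibers are thin, every connected component of each such multi-cell is the graph of a continuous function. Over a connected component $D$ of the projection of one of these multi-cells, the disjointness of projections guarantees that the boundary points of $X_x$ are exactly the values of finitely many continuous functions $\phi_1 < \cdots < \phi_k$ defined on all of $D$, and that these functions never cross. The fiber $(X_1)_x$ is then a union of some of the open intervals cut out by the $\phi_j(x)$ and $\pm\infty$; by connectedness of $D$ this pattern is constant, so $X_1 \cap \pi^{-1}(D)$ splits into finitely many bands of the admissible forms $y<\phi_1$, $\phi_j<y<\phi_{j+1}$, and $y>\phi_k$. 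Over the complementary base set $\pi(X_1) \setminus \pi(E)$ every nonempty fiber has empty boundary and hence equals all of $\mathbb R$, giving full cylinders. Partitioning these base sets into multi-cells by the induction hypothesis and stacking the corresponding forms produces the desired finite multi-cell partition of $X_1$, and together with $X_0$ this finishes the induction.

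The main obstacle is the treatment of $X_1$, and specifically the passage from the multi-cell decomposition of the fiberwise boundary $E$ to a genuine system of globally ordered, non-crossing boundary functions over each connected component of the base. The key leverage is the disjointness of the projections of distinct multi-cells asserted in Lemma \ref{lem:multi-cell-pre2}: it forces all boundary points over a given base point to originate from a single multi-cell, so that the boundary functions are simultaneously defined and linearly ordered on each connected component $D$, after which a connectedness argument makes the interval pattern constant on $D$. Verifying that a single connected component of a multi-cell is again a multi-cell, and that the limits defining the boundary functions exist, will rely on the local definability results of Lemma \ref{lem:ld1} and Lemma \ref{lem:ld2} and on the continuity statement of Corollary \ref{cor:dim2}, in the same spirit as the proof of Lemma \ref{lem:multi-cell-pre2}.
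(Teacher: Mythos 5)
Your overall architecture is close to the paper's (o-minimal case via classical cell decomposition, the fiberwise-discrete part via Lemma \ref{lem:multi-cell-pre2}, reduction of the open-fibered part to a thin set of boundary graphs), but there is a genuine gap at the pivotal step ``by connectedness of $D$ this pattern is constant.'' Taking the fiberwise \emph{topological boundary} $E$ destroys exactly the information that step needs, because adjacent maximal intervals of a fiber may share an endpoint. Concretely, let $D=]-a,a[$ and let $X_1 \subset \mathbb R^2$ have fiber $]0,1[\,\cup\,]2,3[$ over $x<0$ and fiber $]0,1[\,\cup\,]1,2[\,\cup\,]2,3[$ over $0 \leq x <a$; this set is definable in any locally o-minimal expansion of $(\mathbb R,<,0,+)$, e.g.\ $(\mathbb R,<,0,+,\mathbb Z)$. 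Then $\operatorname{bd}((X_1)_x)=\{0,1,2,3\}$ for \emph{every} $x \in D$, so $E = D \times \{0,1,2,3\}$ is already a single multi-cell: Lemma \ref{lem:multi-cell-pre2} may legitimately return $E$ itself, no refinement of the base occurs, the functions $\phi_j$ are the constants $0,1,2,3$ on all of $D$ --- and yet the band $1<y<2$ lies in $X_1$ over only half of $D$. The piece your construction produces from that band, $\{(x,y)\;|\; 0 \leq x <a,\ 1<y<2\}$, has a connected component whose projection is not a connected component of $D$, so it is not a multi-cell, and no connectedness argument can repair this: the interval pattern genuinely flips inside a single $D$. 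This is precisely why the paper does not use the boundary but the sets $Y_{\text{upper}}$ and $Y_{\text{lower}}$ of upper and lower endpoints of the \emph{maximal intervals} of the fibers, and, before that, enforces via the midpoint trick (the sets $Y_{\text{both}}$, $X_{\text{upper}}$, $X_{\text{middle}}$, $X_{\text{lower}}$) its assumption (ii) that closures of distinct connected components of each fiber are disjoint. In the example above, $Y_{\text{upper}}$ has fibers $\{1,3\}$ for $x<0$ and $\{1,2,3\}$ for $x \geq 0$, so its multi-cell decomposition is forced by the disjoint-projection property to cut the base at $x=0$ --- exactly the refinement your $E$ fails to detect.

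Two secondary problems point the same way. The fibers of $E$ are discrete but may be infinite (e.g.\ $\mathbb Z$), so your ``finitely many continuous functions $\phi_1 < \cdots < \phi_k$'' and ``finitely many bands'' are not available; the paper's bookkeeping is indexed by the finitely many multi-cells $Y_{\text{upper},i}$, $Y_{\text{lower},j}$ of the decomposition, each possibly carrying infinitely many graph components, with the band structure verified component by component (and the final pieces defined globally as $X \cap \pi^{-1}(\pi(Y_{\text{upper},i}))$, not per connected component --- components are only locally definable). Likewise, fibers containing unbounded intervals or equal to $\mathbb R$ cannot be described as bands ``cut out by the $\phi_j$ and $\pm\infty$'' when there is no extreme boundary function; the paper splits off $X_\forall$, $X_\infty$, $X_{-\infty}$ at the outset, handling the first by cylinders over a multi-cell partition of the base and the other two via their sets of finite endpoints.
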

\begin{proof}
Consider the case in which the structure in consideration is o-minimal.
A definable set is partitioned into finite cells by \cite[Theorem 3.2.11]{vdD}.
It is also a partition into finite multi-cells because a cell is simultaneously a multi-cell.

We next consider the case in which the structure is not o-minimal.
Let $X$ be a definable subset of $\mathbb R^n$.
We demonstrate that the set $X$ is partitioned into finite multi-cells.
We prove it by the induction on $n$.
Consider the case in which $n=1$.
The theorem is clear when $X=\emptyset$ or $X=\mathbb R$.
We consider the other cases.
The set $X_1$ is the union of all the maximal open intervals contained in $X$, which is definable.
In fact, the set $X_1$ is described as follows:
\begin{align*}
X_1 &=\{x \in X\;|\; \exists \varepsilon >0,\ \forall y \in \mathbb R,\ |x-y| < \varepsilon \rightarrow y \in X\} \text{.}
\end{align*}
The set $X_2 = X \setminus X_1$ is the set of the isolated points and the endpoints of the maximal open intervals in $X$ because the structure is locally o-minimal.
It is clearly a discrete definable set.
The decomposition $X=X_1 \cup X_2$ is a partition into multi-cells. 

We next consider the case in which $n>1$.
Let $\pi:\mathbb R^n \rightarrow \mathbb R^{n-1}$ be the projection forgetting the last coordinate.
Consider the sets 
\begin{align*}
&X_{\text{oi}}=\{(x,y) \in \mathbb R^{n-1} \times \mathbb R \;|\; \exists \varepsilon >0,\  \forall y',\  |y'-y|<\varepsilon \rightarrow (x,y') \in X\} \text{,}\\
&X_{\forall}=\{(x,y) \in \mathbb R^{n-1} \times \mathbb R \;|\; \forall y',\  (x,y') \in X\} \text{,}\\
&X'_{\infty}=\{(x,y) \in \mathbb R^{n-1} \times \mathbb R \;|\; \exists \varepsilon >0, \ \forall y',\ y'>y-\varepsilon \rightarrow (x,y') \in X\} \text{ and }\\
&X'_{-\infty}=\{(x,y) \in \mathbb R^{n-1} \times \mathbb R \;|\;\exists \varepsilon >0, \ \forall y',\ y'<y+\varepsilon \rightarrow (x,y') \in X\} \text{.}
\end{align*}
Set 
\begin{align*}
&X_{\text{boi}}=X_{\text{oi}} \setminus (X_{\forall} \cup X'_{\infty} \cup X'_{-\infty})\text{,}\\ &X_{\infty} = X'_{\infty} \setminus X_{\forall}\text{,}\\
&X_{-\infty} = X'_{\infty} \setminus X_{\forall}\text{ and }\\
&X_{\text{pt}} = X \setminus (X_{\text{boi}} \cup X_{\infty} \cup  X_{-\infty} \cup X_{\forall})\text{.}
\end{align*}
The definable set $X$ is partitioned as follows:
\begin{equation*}
X = X_{\text{boi}} \cup X_{\infty} \cup  X_{-\infty} \cup X_{\forall} \cup X_{\text{pt}}\text{.}
\end{equation*}
By the definition, connected components of non-empty fibers of $X_{\text{boi}}$,  $X_{\infty}$,  $X_{-\infty}$, $X_{\forall}$ and $X_{\text{pt}}$ are a bounded open interval, an open interval unbounded above and bounded below, an open interval bounded above and unbounded below, $\mathbb R$ and a point, respectively.

We have only to show that the above five definable sets are partitioned into multi-cells.
The definable set $X_{\text{pt}}$ is partitioned into multi-cells by Lemma \ref{lem:multi-cell-pre2}.
As to $X_{\forall}$, there exists a partition into multi-cells $\pi(X_{\forall})=\bigcup_{i=1}^k Y_i$ by the induction hypothesis.
Set $X_{\forall,i}= Y_i \times \mathbb R$, then the partition $X_{\forall}=\bigcup_{i=1}^k X_{\forall,i}$ is a partition into multi-cells.
Consider the set 
\begin{equation*}
Y_{\infty}=\{(x,y) \in \pi(X_{\infty}) \times \mathbb R\;|\; (x,y) \not\in X_{\infty},\ \forall y',\ y'>y \rightarrow (x,y') \in X_{\infty}\}\text{.}
\end{equation*}
The definable sets $Y_{\infty}$ consists of the lower endpoints of fibers of $X_{\infty}$.
In particular, $Y_\infty$ satisfies the assumption of Lemma \ref{lem:multi-cell-pre2}.
Let $Y_\infty=\bigcup_{i=1}^k Y_{\infty,i}$ be a partition into multi-cells given by Lemma \ref{lem:multi-cell-pre2}.
Set $X_{\infty,i}=X_{\infty} \cap \pi^{-1}(\pi(Y_{\infty,i}))$.
The definable set $X_{\infty,i}$ is a multi-cell.
In fact, it is clear that the projection image $\pi(X_{\infty,i})$ is a multi-cell because $\pi(X_{\infty,i})=\pi(Y_{\infty,i})$.
Since $Y_{\infty,i}$ is a multi-cell, it is the graph of a continuous function $f$ defined on $\pi(Y_{\infty,i})$.
It is obvious that $X_{\infty,i}=\{(x,y) \in \pi(X_{\infty,i}) \times \mathbb R\;|\; y>f(x)\}$ by the definition.
Hence, the definable set $X_{\infty,i}$ is a multi-cell, and the partition $X_{\infty}=\bigcup_{i=1}^k X_{\infty,i}$ is a partition into multi-cells.
We can show that the definable set $X_{-\infty}$ is partitioned into multi-cell in the same way.

The remaining task is to demonstrate that $X_{\text{boi}}$ is partitioned into multi-cells.
We may assume the followings:
\begin{enumerate}[(i)]
\item All the connected components of non-empty fibers of $X$ are bounded open intervals; 
\item For any $x \in \pi(X)$, the closures of two distinct connected components of $X \cap \pi^{-1}(x)$ have an empty intersection.
\end{enumerate}
We may employ the assumption (i) by setting $X=X_{\text{boi}}$.
We demonstrate that we may also employ the assumption (ii). 
Consider the definable set 
\begin{align*}
Y_{\text{both}} &= \{(x,y_1,y_2) \in \pi(X) \times \mathbb R^2\;|\; (x,y_1) \not\in X, (x,y_2) \not\in X, y_1<y_2,\\
&\qquad \forall c,\ y_1 < c < y_2 \rightarrow (x,c) \in X\}\text{.}
\end{align*}
Set 
\begin{align*}
X_{\text{upper}} &= \{(x,y) \in \pi(X) \times \mathbb R\;|\; \exists y_1, y_2, \ (x,y_1,y_2) \in Y_{\text{both}},\  (y_1+y_2)/2 <y <y_2\}\text{,}\\
X_{\text{middle}} &= \{(x,y) \in \pi(X) \times \mathbb R\;|\; \exists y_1, y_2, \ (x,y_1,y_2) \in Y_{\text{both}}, \  y=(y_1+y_2)/2\}\text{ and }\\
X_{\text{lower}} &= \{(x,y) \in \pi(X) \times \mathbb R\;|\; \exists y_1, y_2, \ (x,y_1,y_2) \in Y_{\text{both}},\  y_1<y<(y_1+y_2)/2\}\text{.}
\end{align*}
The definable set $X_{\text{middle}}$ can be partitioned into finite multi-cells by Lemma \ref{lem:multi-cell-pre2}.
The closures of two distinct connected components of $X_{\text{upper}} \cap \pi^{-1}(x)$ have empty intersections for all $x \in \pi(X)$.
The fiber $X_{\text{lower}} \cap \pi^{-1}(x)$ also enjoys the same property.
Therefore, we may assume that the definable set $X$ satisfies the assumption (ii) by setting $X=X_{\text{upper}}$ and $X=X_{\text{lower}}$.
\medskip

Consider the definable sets 
\begin{align*}
Y_{\text{upper}} &= \{(x,y) \in \pi(X) \times \mathbb R\;|\; (x,y) \not\in X,\  \exists \varepsilon >0,\ \forall c,\ y-\varepsilon < c < y\\
&\qquad \rightarrow (x,c) \in X\}\text{ and }\\
Y_{\text{lower}} &= \{(x,y) \in \pi(X) \times \mathbb R\;|\; (x,y) \not\in X,\  \exists \varepsilon >0,\ \forall c,\ y < c < y+\varepsilon\\
&\qquad \rightarrow (x,c) \in X\}\text{.}
\end{align*}
For any $x \in \pi(X)$, the fiber $Y_{\text{upper}} \cap \pi^{-1}(x)$ is the set of the upper endpoints of the maximal open intervals contained in $X \cap \pi^{-1}(x)$ by the assumption (i).
The fiber $Y_{\text{lower}} \cap \pi^{-1}(x)$ is the set of the lower endpoints of the maximal open intervals.
By Lemma \ref{lem:multi-cell-pre2}, both $Y_{\text{upper}}$ and  $Y_{\text{lower}}$ are partitioned into finite multi-cells.
Let $Y_{\text{upper}}=\bigcup_{i=1}^k Y_{\text{upper},i}$ and $Y_{\text{lower}}=\bigcup_{i=1}^l Y_{\text{lower},i}$ be partitions into finite multi-cells.
We have $\pi(Y_{\text{upper},i_1}) \cap \pi(Y_{\text{upper},i_2})=\emptyset$ by Lemma \ref{lem:multi-cell-pre2} if $i_1 \not=i_2$.
We may further assume that, for all $1 \leq i \leq k$ and $1 \leq j \leq l$, we have $\pi(Y_{\text{upper},i})=\pi(Y_{\text{lower},j})$ or  $\pi(Y_{\text{upper},i}) \cap \pi(Y_{\text{lower},j}) = \emptyset$.
In fact, for all $1 \leq i \leq k$ and $1 \leq j \leq l$, the definable set $\pi(Y_{\text{upper},i}) \cap \pi(Y_{\text{lower},j})$ are partitioned into finite multi-cells by the induction hypothesis.
Let $\pi(Y_{\text{upper},i}) \cap \pi(Y_{\text{lower},j})=\bigcup_{m=1}^{p(i,j)} Z_{ijm}$ be partitions.
Set $Y_{\text{upper},ijm}=Y_{\text{upper},i} \cap \pi^{-1}(Z_{ijm})$ and $Y_{\text{lower},ijm}=Y_{\text{lower},j} \cap \pi^{-1}(Z_{ijm})$.
They are obviously multi-cells satisfying the requirement.

Set $X_i = X \cap \pi^{-1}(\pi(Y_{\text{upper},i}))$.
We have a partition $X=\bigcup_{i=1}^k X_i$.
The remaining task is to show that $X_i$ is a multi-cell.
Take an arbitrary connected component $C$ of $X_i$ and an arbitrary point $\hat{z} \in C$.
Set $\hat{x}=\pi(\hat{z})$ and $\hat{z}=(\hat{x},\hat{y})$ for some $\hat{y} \in \mathbb R$.
Since connected components of the fiber $X \cap \pi^{-1}(\hat{x})$ are bounded open intervals by the assumption (i), there exist $y_u, y_l \in \mathbb R$, $1 \leq i' \leq k$ and $1 \leq j' \leq l$ with $y_l < \hat{y} < y_u$, $(x,y_u) \in Y_{\text{upper},i'}$, $(x,y_l) \in Y_{\text{lower},j'}$ and $(\hat{x},y) \in X$ for all $y_l<y<y_u$.
We have $\pi(Y_{\text{upper},i'})=\pi(Y_{\text{lower},j'})$ by the assumption.
Let $Z$ be its connected component containing the point $\hat{x}$.
There are two continuous function $f$ and $g$ defined on $Z$ such that $y_l=f(\hat{x})$, $y_u=g(\hat{x})$ and the graphs of $f$ and $g$ are connected components of $Y_{\text{lower},j'}$ and $Y_{\text{upper},i'}$, respectively, because $Y_{\text{lower},j'}$ and $Y_{\text{upper},i'}$ are multi-cells.
We demonstrate that $f(x)<g(x)$ on $Z$ and $C=\{(x,y) \in Z \times \mathbb R\;|\; f(x)<y<g(x)\}$.

We show that the graph of $f$ does not intersect with $Y_{\text{upper}}$. 
In particular, we have $f(x)<g(x)$ on $Z$ by the intermediate value theorem.
Assume the contrary.
Let $x' \in Z$ and $y'=f(x')$ with $(x',y') \in Y_{\text{upper}}$. 
By the definition of $f$ and $Y_{\text{upper}}$, there exist $y_1,y_2 \in \mathbb R$ with $y_1<y'<y_2$ such that $\{x\} \times ]y_1,y'[$ and $\{x\} \times ]y',y_2[$ are connected components of the fiber $X \cap \pi^{-1}(x)$.
The intersection of their closures is not empty.
It is a contradiction to the assumption (ii).

We finally show that $C=\{(x,y) \in Z \times \mathbb R\;|\; f(x)<y<g(x)\}$.
The set $C$ is contained in $\{(x,y) \in Z \times \mathbb R\;|\; f(x)<y<g(x)\}$ because the intersection of the latter set with $X$ is closed and open in $X$ by the definition.
We demonstrate the opposite inclusion.
Assume the contrary.
Let $(x',y')$ be a point satisfying $x' \in Z$, $f(x')<y'<g(x')$ and $(x',y') \not\in C$.
By the assumption (i), there exists $\overline{y} \in \mathbb R$ with $f(x')<\overline{y}<y'$ and 
$(x',\overline{y}) \in Y_{\text{upper}}$.
Since we have $\pi(Y_{\text{upper},i_1}) \cap  \pi(Y_{\text{upper},i_2})=\emptyset$ for all $i_1 \not= i_2$, we have $(x',\overline{y}) \in Y_{\text{upper},i'}$.
Since $Y_{\text{upper},i'}$ is a multi-cell, the connected component of $Y_{\text{upper},i'}$ containing the point $(x',\overline{y})$ is the graph of some continuous function $g'$ defined on $Z$.
We have $f(x')<g'(x')<g(x')$.
The graph of $g'$ does not intersect with the graph of $g$ because $Y_{\text{upper},i'}$ is a multi-cell.
The graph of $g'$ does not intersect with the graph of $f$ because the graph of $f$ does not intersect with $Y_{\text{upper}}$ as we demonstrated previously. 
We get $y_l=f(\hat{x})<g'(\hat{x})<g(\hat{x})=y_u$ by the intermediate value theorem.
We obtain $(\hat{x},g'(\hat{x})) \not\in X$, which contradicts the fact that $(\hat{x},y) \in X$ for all $y_l<y<y_u$.
\end{proof}

\section{Uniform local definable cell decomposition}\label{sec:udcd}

In this section, we first show that a locally o-minimal expansion of the group of reals $(\mathbb R, <,0,+)$ has a uniformity property.
We also prove the uniform local definable cell decomposition theorem introduced in Section \ref{sec:intro} using the uniformity property.

We need the following technical definition for proving the uniformity theorem.

\begin{definition}
Consider a locally o-minimal expansion of the group of reals $(\mathbb R, <,0,+)$.
Let $X \subset \mathbb R^n$ be a multi-cell and $Y$ be a discrete definable subset of $X$.
The notation $\pi_k:\mathbb R^n \rightarrow \mathbb R^k$ denotes the projection onto the first $k$ coordinates for all $1 \leq k \leq n$.
Note that  $\pi_n$ is the identity map.
The definable set $Y$ is a \textit{representative set of connected components of $X$} if the intersection of $\pi_k(Y)$ with any connected component of $\pi_k(X)$ is a singleton for any $1 \leq k \leq n$. 
\end{definition}

\begin{lemma}\label{lem:onept}
Consider a locally o-minimal expansion of the group of reals $(\mathbb R, <,0,+)$.
Let $X \subset \mathbb R^{m+n}$ be a multi-cell and $\pi:\mathbb R^{m+n} \rightarrow \mathbb R^m$ be the projection onto the first $m$ coordinates.
There exists a definable subset $Y$ of $X$ such that $Y \cap \pi^{-1}(x)$ is a representative set of  connected components of $X \cap \pi^{-1}(x)$ for any $x \in \pi(X)$. 
\end{lemma}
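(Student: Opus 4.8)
The plan is to argue by induction on $n$. When $n=0$ the fiber $X_x$ is at most one point, so $Y=X$ works. For the inductive step let $p\colon \mathbb R^{m+n}\to\mathbb R^{m+n-1}$ forget the last coordinate and set $X'=p(X)$; since $X$ is a multi-cell, so is $X'$, and the induction hypothesis applied to $X'\subset\mathbb R^{m+(n-1)}$ with the same projection $\pi$ yields a definable $Y'\subset X'$ with $Y'_x$ a representative set of $X'_x$ for every $x$. As $p$ commutes with slicing at $x$, one has $X'_x=p''(X_x)$, where $p''\colon\mathbb R^n\to\mathbb R^{n-1}$ forgets the last coordinate. I would then construct $Y$ above $Y'$ by choosing, in each one-dimensional fiber lying over a point of $Y'$, exactly one point from each of its connected components.

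The choice must be made by a single formula. Given $x$ and $(z_1,\dots,z_{n-1})$, consider $F=\{t\in\mathbb R\mid (x,z_1,\dots,z_{n-1},t)\in X\}$. For $t\in F$ the connected component of $t$ in $F$ is an interval whose endpoints $a=\inf$ and $b=\sup$ are definable from $t$, and I declare $t$ to be the chosen representative exactly when: $t=a=b$ (the component is a point); $t=(a+b)/2$ if $a,b$ are both finite; $t=a+c$ if $a$ is finite and $b=+\infty$; $t=b-c$ if $a=-\infty$ and $b$ finite; or $t=0$ if the component is all of $\mathbb R$. Here $c$ is a fixed positive element and $(a+b)/2$ is definable because division by $2$ is definable in $(\mathbb R,<,0,+)$. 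Put $Y=\{(x,z_1,\dots,z_n)\in X\mid (x,z_1,\dots,z_{n-1})\in Y'\text{ and }z_n\text{ is the chosen representative of its component of }F\}$. Then $Y$ is definable, is contained in $X$ (each chosen point lies in its own component), each chosen point is the unique one selected in its component, and $p''(Y_x)=Y'_x$, since $Y'_x\subset X'_x=p''(X_x)$ forces the relevant fiber $F$ to be non-empty above every point of $Y'_x$.

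It remains to verify that $Y_x$ is a representative set of $X_x$, i.e.\ that $\pi'_k(Y_x)$ meets each connected component of $\pi'_k(X_x)$ in a singleton for $1\le k\le n$, where $\pi'_k\colon\mathbb R^n\to\mathbb R^k$ projects onto the first $k$ coordinates. For $k\le n-1$ this projection factors through $p''$, so it is immediate from $p''(Y_x)=Y'_x$ and the fact that $Y'_x$ is a representative set of $X'_x$; the resulting fibers are automatically discrete because distinct components of a projection are separated. The case $k=n$, namely that $Y_x$ meets each connected component $K$ of $X_x$ exactly once, is the heart of the matter, and I expect it to be the main obstacle, because a fiber of a multi-cell need not be a multi-cell — it may mix isolated points with intervals — so $K$ and the components of $p''(X_x)$ cannot be read off directly from the definition. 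I would resolve this by folding two structural facts into the induction hypothesis, both forced by the five admissible shapes of multi-cell components: first, distinct connected components of $X$ have slices over any point of $\mathbb R^{m+n-1}$ that never merge (a graph filling the gap of a band would make the two pieces one component), so every connected component of the one-dimensional fiber $X_{(x,w)}$ is the slice of a single component of $X$; second, every connected component $K$ of $X_x$ projects under $p''$ onto a \emph{full} connected component of $X'_x$. Granting these, the representative $w_0=Y'_x\cap p''(K)$ lies under $K$, the slice of $K$ over $w_0$ is a single connected component of $X_{(x,w_0)}$, and the selection rule picks precisely one of its points, so $|Y_x\cap K|=1$. Proving the second structural fact — that fibers of multi-cells, though not themselves multi-cells, retain the property that components project onto components at every level — is where the genuine work lies, and carrying it alongside the existence statement keeps it available for $X'$ when treating $X$.
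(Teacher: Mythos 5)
Your plan coincides with the paper's own proof: it too inducts on $n$, applies the induction hypothesis to the multi-cell $\pi_1(X)$ (your $X'=p(X)$) to obtain a representative set $Y_1$ (your $Y'$) at level $n-1$, and then makes the identical definable one-point-per-component selection in the one-dimensional fibers over $Y_1$ --- the midpoint $(y_1+y_2)/2$ for bounded open intervals, the finite endpoint shifted by a fixed $\varepsilon$ for half-lines, $0$ for full lines, and the isolated points themselves. The structural verification you single out as the remaining genuine work (that components of $X_x$ project onto full components of $X'_x$ and that fiber components do not merge across components of $X$) is precisely the step the paper dispatches with ``it is easy to demonstrate that $Y$ is also a representative set of connected components of $X$,'' so your sketch is, if anything, more explicit than the source about what must be checked.
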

\begin{proof}
We demonstrate the lemma by the induction on $n$.
We first consider the case in which $n=1$.
Consider the following definable sets:
\begin{align*}
&S_{\infty} = \{x \in \pi(X)\;|\; \forall y \in \mathbb R, (x,y) \in X\}\text{,}\\
&S_u = \{x \in \pi(X)\;|\; \exists y \in \mathbb R, \ \forall z, \ z > y \rightarrow (x,z) \in X\} \text{ and }\\
&S_l = \{x \in \pi(X)\;|\; \exists y \in \mathbb R, \ \forall z, \ z < y \rightarrow (x,z) \in X\}\text{.}
\end{align*}
The definable functions $\rho_u:S_u\setminus S_\infty \rightarrow \mathbb R$ and $\rho_l:S_l\setminus S_\infty \rightarrow \mathbb R$ are given as follows:
\begin{align*}
\rho_u(x) &= \inf\{y \in \mathbb R\;|\; \forall z, \ z > y \rightarrow (x,z) \in X\}\text{ and }\\
\rho_l(x) &= \sup\{y \in \mathbb R\;|\; \forall z, \ z < y \rightarrow (x,z) \in X\}\text{.}
\end{align*}
We set
\begin{align*}
Y_c &= \{(x,y_1,y_2) \in \pi(X) \times \mathbb R^2\;|\; (x,y_1) \not\in X, (x,y_2) \not\in X, y_1<y_2,\\
&\qquad \forall c,\ y_1 < c < y_2 \rightarrow (x,c) \in X\}\text{ and }\\
Y_p &= \{(x,y) \in X\;|\; \exists \varepsilon > 0,\  \forall c,\ 0<|y-c|<\varepsilon \rightarrow (x,c) \not\in X\}\text{.}
\end{align*}
We finally set
\begin{align*}
Y &= \{(x,\rho_u(x)+\varepsilon) \in \mathbb R^{m+1}\;|\; x \in S_u\} \cup \{(x,\rho_l(x)-\varepsilon) \in \mathbb R^{m+1}\;|\; x \in S_l\}\\
&\quad  \cup \{(x,y) \in \mathbb R^{m+1}\;|\; \exists y_1, y_2, \ (x,y_1,y_2) \in Y_c,\ y = (y_1+y_2)/2\}\\
&\quad  \cup Y_p \cup (S_\infty \times \{0\})\text{,}
\end{align*}
where $\varepsilon$ is a fixed positive real number.
The definable set $Y \cap \pi^{-1}(x)$ is obviously a representative set of connected components of $X \cap \pi^{-1}(x)$ for any $x \in \pi(X)$. 

We consider the case in which $n>1$.
The notations $\pi_1:\mathbb R^{m+n} \rightarrow \mathbb R^{m+n-1}$ and $\pi_2: \mathbb R^{m+n-1} \rightarrow \mathbb R^{m}$ denote the projections forgetting the last coordinate and onto the first $m$ coordinates, respectively.
The projection image $\pi_1(X)$ is a multi-cell by the definition of multi-cells.
There exists a definable subset $Y_1 \subset \pi_1(X)$ such that the definable set $Y_1 \cap \pi_2^{-1}(x)$ is a representative set of connected components of $\pi_1(X) \cap \pi_2^{-1}(x)$ for any $x \in \pi(X)$ by applying the induction hypothesis to $\pi_1(X)$ and $\pi_2$.
Set $X'=X \cap \pi_1^{-1}(Y_1)$, and apply the lemma for $n=1$ to $X'$ and $\pi_1$.
We can find a representative set $Y$ of connected components of $X'$.
It is easy to demonstrate that $Y$ is also a representative set of connected components of $X$.
\end{proof}

\begin{theorem}[Uniformity theorem]\label{thm:uniform}
Consider a locally o-minimal expansion of the group of reals $(\mathbb R, <,0,+)$.
For any definable subset $X$ of $\mathbb R^{n+1}$, there exist a positive element $r \in \mathbb R$ and a positive integer $K$ such that, for any $a \in \mathbb R^n$, the definable set $X \cap (\{a\} \times ]-r,r[)$ has at most $K$ connected components.
\end{theorem}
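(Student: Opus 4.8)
The plan is to reduce the statement to a uniform cardinality bound for the fibres of a discrete definable set, and then to prove that bound by induction on $n$ using Theorem~\ref{thm:dim}. First I would decompose $X$ into finitely many multi-cells $X = X_1 \cup \dots \cup X_N$ by Theorem~\ref{thm:multi-cell}, and apply Lemma~\ref{lem:onept} to each $X_i$ (with $m=n$ and the projection $\pi$ onto the first $n$ coordinates) to obtain a discrete definable set $Y_i \subset X_i$ whose fibre $(Y_i)_a$ is a representative set of the connected components of the one-dimensional multi-cell $(X_i)_a$ for every $a$. Writing $Y = \bigcup_i Y_i$, which is again discrete and definable, the goal becomes to bound $|Y \cap (\{a\} \times ]-r,r[)|$ uniformly in $a$.

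Next I would relate the number of connected components of $X \cap (\{a\} \times ]-r,r[)$ to this cardinality. Since the number of components of a union of subsets of a line is at most the sum of the numbers of components of the pieces, it suffices to bound the components of each $(X_i)_a \cap ]-r,r[$. Because $X_i$ is a multi-cell, each connected component of $(X_i)_a$ is a point, an open interval, a half-line or the whole line. Call a component contained in $]-r,r[$ of \emph{type A} and one that meets $]-r,r[$ but reaches or passes $r$ or $-r$ of \emph{type B}. A disjointness argument on the line shows there are at most two components of type B, namely at most one with supremum $\geq r$ and at most one with infimum $\leq -r$. A type A component is necessarily a bounded interval or an isolated point, so by the explicit form of the representative set in Lemma~\ref{lem:onept}, which selects midpoints of bounded intervals and the isolated points themselves, its chosen representative lies in $]-r,r[$. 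Hence the number of type A components is at most $|(Y_i)_a \cap ]-r,r[|$, and therefore the number of components of $X \cap (\{a\} \times ]-r,r[)$ is at most $|Y \cap (\{a\} \times ]-r,r[)| + 2N$.

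It remains to prove that a discrete definable set $Y \subset \mathbb R^{n+1}$ admits $r>0$ and a positive integer $K$ with $|Y \cap (\{a\} \times ]-r,r[)| \leq K$ for all $a \in \mathbb R^n$, and I would do this by induction on $n$. When $n=0$ the set $Y \cap ]-r,r[$ is finite for any $r>0$ because $[-r,r]$ is compact and the structure is locally o-minimal, so any such $r$ with $K = |Y \cap [-r,r]|$ works. For the inductive step let $q:\mathbb R^{n+1} \rightarrow \mathbb R^n$ be the projection forgetting the \emph{first} coordinate. Then $\dim q(Y) \leq \dim Y = 0$ by Theorem~\ref{thm:dim}, so $q(Y)$ is again discrete, and the induction hypothesis yields $r$ and $K$ bounding $|q(Y) \cap (\{b\} \times ]-r,r[)|$ over all $b \in \mathbb R^{n-1}$. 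For $a = (a_1,b) \in \mathbb R \times \mathbb R^{n-1}$, every $t$ with $(a_1,b,t) \in Y$ satisfies $(b,t) \in q(Y)$, so the fibre of $Y$ over $a$ embeds into the fibre of $q(Y)$ over $b$ in the last coordinate; thus the \emph{same} $r$ and $K$ bound $|Y \cap (\{a\} \times ]-r,r[)|$. Combining with the reduction above gives the theorem with this $r$ and the integer $K + 2N$.

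The delicate point is the reduction rather than the induction: one must ensure that the components truncated by the window $]-r,r[$ are accounted for correctly, that is, that only boundedly many components of a fibre fail to have a representative inside the window. This is exactly what the type A / type B dichotomy controls, the crucial inputs being the explicit midpoint-and-endpoint description of the representative set from Lemma~\ref{lem:onept} and the one-dimensional fact that at most two components of a fibre can straddle the boundary of the window. The induction itself is painless once Theorem~\ref{thm:dim} guarantees that forgetting a coordinate cannot raise the dimension, so that discreteness, and with it finiteness of fibres on compacta, is preserved.
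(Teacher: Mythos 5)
Your first half is essentially fine: inside a multi-cell distinct connected components have disjoint closures, so the components of a fibre $(X_i)_a$ really are points, open intervals, half-lines or all of $\mathbb R$, the midpoint/isolated-point representatives from the $n=1$ construction of Lemma \ref{lem:onept} do lie inside their components, and the ``at most two straddling components per multi-cell'' count is correct; this gives $\operatorname{NC}\bigl(X \cap (\{a\} \times ]-r,r[)\bigr) \leq \left|Y \cap (\{a\} \times ]-r,r[)\right| + 2N$. The gap is the single sentence ``$Y=\bigcup_i Y_i$, which is again discrete.'' Lemma \ref{lem:onept} only guarantees that each \emph{fibre} $(Y_i)_a$ is discrete; the set $Y_i$ itself is typically a graph of positive dimension, because the construction selects $(x,\rho_u(x)+\varepsilon)$, midpoints $(y_1+y_2)/2$, etc., \emph{over every base point} $x$. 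Concretely, for the multi-cell $X=\{(x,y)\;|\;0<x<1,\ x-1<y<x+1\}$ the construction yields $Y=\{(x,x)\;|\;0<x<1\}$, a one-dimensional set, and its image under your projection $q$ forgetting the first coordinate is the interval $]0,1[$, not a discrete set. So your induction in the second half, which is internally correct but only applies to genuinely zero-dimensional $Y$ (and for those could be shortened: project straight to the last coordinate, then Theorem \ref{thm:dim} plus compactness of $[-r,r]$ finishes), never applies to the set you actually produced. What you would need is a uniform bound on $|Y_a \cap ]-r,r[|$ for a definable set whose fibres are merely discrete --- but that is precisely the theorem being proved, restricted to fibre-discrete sets, so the argument is circular at its core. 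A telltale symptom: your proof never uses smallness of $r$ and would yield the bound for every window, whereas the theorem only claims existence of some $r$ and the paper works to obtain a small one.

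The paper closes exactly this gap by two devices you do not use. First, it exploits the full strength of the notion of representative set: by definition $\pi_k(Y_i)$ meets each connected component of $\pi_k(X_i)$ in a \emph{singleton} for every $k$, so all representatives lying over a fixed connected component $E$ of the base are stacked over one single base point $a' \in E$, while the multi-cell structure makes the number of fibre components of $X_i$ constant along $E$; hence $\operatorname{NC}(X_i \cap \pi_1^{-1}(a)) = |Y_i \cap \pi_1^{-1}(a')| \leq |\pi_2(Y_i)|$ for all $a \in E$, with the right-hand side independent of $a$. Second, it adjoins the radius as a new first coordinate, forming $\mathcal X = \{(r,x,y)\;|\;(x,y)\in X,\ r>0,\ -r<y<r\}$, decomposes $\mathcal X$ into multi-cells, applies Lemma \ref{lem:onept} over the $r$-line, and projects to the $(r,y)$-plane to get sets $\mathcal Z_i \subset \mathbb R^2$ with zero-dimensional slices over each $r$; uniform local o-minimality of the second kind then supplies a single small $r$ for which each $\mathcal Z_i \cap (\{r\} \times ]-r,r[)$ is finite, and that finite cardinality is the uniform $K$. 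If you want to salvage your window bookkeeping, you must replace the false global discreteness of $Y$ by this component-constancy plus one-base-point-per-component mechanism.
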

\begin{proof}
%
%
Consider the set
\begin{equation*}
\mathcal X = \{(r,x,y) \in \mathbb R \times \mathbb R^n \times \mathbb R\;|\; (x,y) \in X\text{, }r>0\text{ and }-r<y<r\}\text{.}
\end{equation*}
Apply Theorem \ref{thm:multi-cell} to $\mathcal X$.
We have a partition into multi-cells $\mathcal X = \bigcup_{i=1}^k \mathcal X_i$.
Let $\Pi_1:\mathbb R \times \mathbb R^n \times \mathbb R \rightarrow \mathbb R$ be the projection onto the first coordinate.
We next apply Lemma \ref{lem:onept} to $\mathcal X_i$ and $\Pi_1$.
For any $1 \leq i \leq k$, we can take a definable subset $\mathcal Y_i$ of $\mathcal X_i$ such that, for any $r \in \mathbb R$, $\mathcal Y_i \cap \Pi_1^{-1}(r)$ is a representative set of connected components of $\mathcal X_i \cap \Pi_1^{-1}(r)$.
Let $\Pi_2:\mathbb R \times \mathbb R^n \times \mathbb R \rightarrow \mathbb R^2$ be the projection given by $\Pi_2(r,x,y)=(r,y)$.
Set $\mathcal Z_i = \Pi_2(\mathcal Y_i)$.
We have $\dim(\mathcal Y_i \cap \Pi_1^{-1}(r)) \leq 0$ for all $r \in \mathbb R$ because $\mathcal Y_i \cap \Pi_1^{-1}(r)$ is discrete.
We get $\dim(\mathcal Z_i \cap (\{r\} \times \mathbb R)) \leq 0$ by Theorem \ref{thm:dim}.
Since the structure is a uniformly locally o-minimal structure of the second kind, there exists a positive element $R \in \mathbb R$ such that, for any $r >0$ with $r<R$, the definable sets $\mathcal Z_i \cap (\{r\} \times ]-R,R[)$ consist of finite points for all $1 \leq i \leq k$.

Fix an $r >0$ with $r<R$ and an arbitrary point $a \in \mathbb R^n$.
The definable set $\mathcal Z_i \cap (\{r\} \times  ]-r,r[ )$ is a finite set for any $1 \leq i \leq k$.
Set $K=\displaystyle\sum_{i=1}^k \left|\mathcal Z_i \cap (\{r\} \times  ]-r,r[ )\right|$.
Let $\Pi_3:\mathbb R^{n+2} \rightarrow \mathbb R^{n+1}$ be the projection forgetting the first coordinate.
Set 
\begin{align*}
&X^{<r} =\{(x,y) \in X\;|\; -r < y < r\}=\Pi_3(\mathcal X \cap (\{r\} \times \mathbb R^{n+1}))
\text{,}\\
&X_i = \Pi_3(\mathcal X_i \cap (\{r\} \times \mathbb R^{n+1}))\text{ and }\\
&Y_i = \Pi_3(\mathcal Y_i \cap (\{r\} \times \mathbb R^{n+1}))\text{.}
\end{align*}
Let $\pi_1:\mathbb R^{n+1} \rightarrow \mathbb R^n$ and $\pi_2:\mathbb R^{n+1} \rightarrow \mathbb R$ be the projections onto first $n$ coordinates and onto the last coordinate, respectively.
We easily get $X^{<r} = \bigcup_{i=1}^k X_i$ and $X \cap (\{a\} \times ]-r,r[) = X^{<r} \cap \pi_1^{-1}(a)$.
The definable set $X_i$ is a multi-cell.
The definable set $Y_i$ is a representative set of connected components of $X_i$.
The notation $p:\mathbb R^2 \rightarrow \mathbb R$ denotes the projection onto the second coordinate.
We have $\pi_2(Y_i) = p( \mathcal Z_i \cap (\{r\} \times  ]-r,r[))$.

When $a \in \pi_1(X_i)$ for some $1 \leq i \leq k$, there exists a point $a' \in \pi_1(Y_i)$ contained in the connected component of $\pi_1(X_i)$ containing the point $a$.
The definable set $X_i \cap \pi_1^{-1}(a)$ has the same number of connected components as $X_i \cap \pi_1^{-1}(a')$, and which is equal to $|Y_i \cap \pi_1^{-1}(a')|$ 
by the definitions of multi-cells and representative sets of their connected components.
The notation $\operatorname{NC}(S)$ denotes the number of connected components of a definable set $S$.
We therefore have
\begin{align*}
\operatorname{NC}(X \cap (\{a\} \times ]-r,r[)) &= \operatorname{NC}(X^{<r} \cap \pi_1^{-1}(a))
\leq \displaystyle\sum_{i=1}^k \operatorname{NC}(X_i \cap \pi_1^{-1}(a))\\
&=\displaystyle\sum_{i=1}^k \operatorname{NC}(X_i \cap \pi_1^{-1}(a'))
=\displaystyle\sum_{i=1}^k |Y_i \cap \pi_1^{-1}(a')|\\
& \leq \displaystyle\sum_{i=1}^k |\pi_2(Y_i)| = \displaystyle\sum_{i=1}^k \left|p( \mathcal Z_i \cap (\{r\} \times  ]-r,r[)) \right|\\
&=\displaystyle\sum_{i=1}^k \left|\mathcal Z_i \cap (\{r\} \times  ]-r,r[)\right| =K\text{.}
\end{align*}
We have finished the proof.
\end{proof}

We begin with the proof of Theorem \ref{thm:main}.
\begin{definition}
Let $(j_1, \ldots, j_d)$ be an increasing sequence of positive integers with $1 \leq j_k \leq n$ for all $1 \leq k \leq d$.
Let $\pi_{(j_1, \ldots, j_d)}: \mathbb R^n \rightarrow \mathbb R^{d}$ be the projection given by $\pi_{(j_1, \ldots, j_d)}(x_1,\ldots,x_n)=(x_{j_1}, \ldots, x_{j_d})$.
A cell $C$ in $\mathbb R^n$ of dimension $d$ is \textit{of type} $(j_1, \ldots, j_d)$ if the restriction of the projection  $\pi_{(j_1, \ldots, j_d)}$ to $C$ is a definable hemeomorphism onto its image.
\end{definition}

\begin{theorem}[Uniform local definable cell decomposition]\label{thm:udcd}
Consider a locally o-minimal expansion of the group of reals $(\mathbb R, <,0,+)$. 
Let $\{A_\lambda\}_{\lambda\in\Lambda}$ be a finite family of definable subsets of $\mathbb R^{m+n}$.
There exist an open box $B$ in $\mathbb R^n$ containing the origin and a finite partition of definable sets 
\begin{equation*}
\mathbb R^m \times B = X_1 \cup \ldots \cup X_k
\end{equation*}
such that $B=(X_1)_b \cup \ldots \cup (X_k)_b$ is a definable cell decomposition of $B$ for any $b \in \mathbb R^m$ and $X_i \cap A_\lambda = \emptyset$ or $X_i \subset A_\lambda$ for any $1 \leq i \leq k$ and $\lambda \in \Lambda$.
Furthermore, the type of the cell $(X_i)_b$ is independent of the choice of $b$ with $(X_i)_b \not= \emptyset$.
\end{theorem}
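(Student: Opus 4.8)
The plan is to prove Theorem \ref{thm:udcd} by induction on $n$, keeping the parameter dimension $m$ fixed throughout, and to use the Uniformity Theorem \ref{thm:uniform} to bring the last box-coordinate under uniform control. Throughout, a cell over a base cell will be produced as a graph of a continuous function or as an open band between two such graphs, so the real content is to arrange, uniformly in the parameter $b$, that the relevant boundary functions are continuous and that the combinatorial data (number of boundary points, their order, and the membership status in each $A_\lambda$) are constant on each base cell.

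For the base case $n=1$ I would apply Theorem \ref{thm:uniform} to each $A_\lambda \subset \mathbb R^{m+1}$ to obtain a single $r>0$ and the box $I=]-r,r[$ such that every fibre $(A_\lambda)_b \cap I$ has at most $N$ connected components, with $N$ independent of $b$. Since the structure is locally o-minimal each such fibre is a finite union of points and open intervals, so $F=\{(b,y)\in\mathbb R^m\times I : y\in\bigcup_\lambda\operatorname{bd}((A_\lambda)_b\cap I)\}$ is definable with $|F_b|\le N$. Enumerating each fibre in increasing order gives definable partial functions $\xi_1(b)<\cdots<\xi_N(b)$, and the graphs of the $\xi_j$ together with the open bands between consecutive graphs (and the two end bands meeting $\pm r$) decompose each fibre of $I$ into $0$-cells and $1$-cells; no continuity is required because the fibres live in $\mathbb R$. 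Intersecting each band with each $A_\lambda$ and with its complement refines this decomposition: since $F$ contains all fibrewise boundary points, each band is fibrewise either inside or disjoint from $(A_\lambda)_b$, so these intersections still have interval-or-empty fibres. The resulting finitely many definable sets $X_i$ satisfy the compatibility with $\{A_\lambda\}$ and have constant fibre type.

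For the inductive step I would write $\mathbb R^{m+n}=\mathbb R^{m+n-1}\times\mathbb R$ with $p$ forgetting the last coordinate, apply Theorem \ref{thm:uniform} to each $A_\lambda$ to fix the last-coordinate box $I=]-r,r[$ and a uniform bound $N$, and form the definable boundary functions $\xi_1(z)<\cdots<\xi_N(z)$ enumerating $\bigcup_\lambda\operatorname{bd}((A_\lambda)_z\cap I)$ for $z\in\mathbb R^{m+n-1}$. The crux is to choose a finite family $\mathcal A'$ of definable subsets of $\mathbb R^{m+n-1}=\mathbb R^{m+(n-1)}$ to feed into the induction hypothesis so that all this data stabilise on each resulting base cell. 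I would put into $\mathcal A'$ the domains $\Omega_j=\{z:|F_z|\ge j\}$ (making the number of boundary points constant on each base cell), the loci recording for each index $s$ and each $\lambda$ whether the $s$-th band and the $s$-th graph lie in $(A_\lambda)$ (making the $\{A_\lambda\}$-compatibility constant), and the iterated discontinuity loci described next.

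The delicate point, which I expect to be the main obstacle, is continuity: for the fibre $(X_i)_b\subset\mathbb R^n$ to be a genuine cell I need $x'\mapsto\xi_j(b,x')$ to be continuous on the base cell $(X'_i)_b\subset\mathbb R^{n-1}$ for each fixed $b$. To force this I would set $\mathcal E_j^{(1)}=\{(b,x'):\xi_j(b,\cdot)\text{ is discontinuous at }x'\}$ and define $\mathcal E_j^{(s+1)}$ to be the fibrewise discontinuity locus of the restriction of $\xi_j(b,\cdot)$ to $(\mathcal E_j^{(s)})_b$. By Corollary \ref{cor:dim2} the fibre dimension drops strictly at each step, so for each $j$ the chain $\Omega_j\supset\mathcal E_j^{(1)}\supset\mathcal E_j^{(2)}\supset\cdots$ reaches the empty fibre after at most $n$ steps, yielding a finite family; I include all $\mathcal E_j^{(s)}$ in $\mathcal A'$. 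Applying the induction hypothesis to $\mathcal A'$ gives a box $B'\subset\mathbb R^{n-1}$ containing the origin and a fibrewise cell decomposition $\mathbb R^m\times B'=X'_1\cup\cdots\cup X'_l$ refining $\mathcal A'$; then each $(X'_i)_b$ lies in a single difference $(\mathcal E_j^{(s)})_b\setminus(\mathcal E_j^{(s+1)})_b$, on which $\xi_j(b,\cdot)$ is continuous, so its restriction to $(X'_i)_b$ is continuous. Setting $B=B'\times I$ and building over each $X'_i$ the graphs $\{y=\xi_s(b,x')\}$ and the open bands between consecutive graphs (including the two bands meeting $\pm r$), I obtain genuine cells in $\mathbb R^n$; their types are the base types with the last coordinate either bound or free, so by the furthermore clause of the induction hypothesis each fibre type is independent of $b$, and the membership and compatibility conditions hold because $\mathcal A'$ was chosen to keep the combinatorics constant on each base cell. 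This completes the induction.
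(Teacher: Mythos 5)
Your proposal is correct, and its skeleton coincides with the paper's: induction on $n$ with $m$ fixed, Theorem \ref{thm:uniform} to fix the last-coordinate interval $I=]-r,r[$ and a uniform bound, definable enumeration of the fibrewise boundary points, refinement by membership signatures for the $A_\lambda$, and iterated use of Corollary \ref{cor:dim2}. Where you genuinely diverge is in the treatment of continuity, which you rightly call the crux. The paper builds \emph{joint} continuity into the $n=1$ case: it partitions the parameter space into pieces $S_{ik}$ of strictly decreasing dimension on which the boundary functions $y_j$ are continuous in all variables simultaneously, and then in the inductive step it simply re-applies the already-proved $n=1$ statement with the enlarged parameter space $\mathbb R^{m+n-1}$; the resulting sets $Y_i$ are globally graphs and bands of functions continuous on all of $\pi(Y_i)$, so intersecting with $\pi^{-1}(Z_j)$, where the $Z_j$ come from the induction hypothesis applied to $\{\pi(Y_i)\}$, yields fibrewise cells immediately. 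You instead keep the parameter slot separate and demand only slicewise continuity, enforcing it by feeding the iterated fibrewise discontinuity loci $\mathcal E_j^{(s)}$ into the induction hypothesis; this is sound because the $\mathcal E_j^{(s)}$ are definable and nested, Corollary \ref{cor:dim2} applied slice by slice forces the fibre dimension to drop so the chain has length at most $n$ uniformly in $b$, the set-level dichotomy $X'_i\subset\mathcal E_j^{(s)}$ or $X'_i\cap\mathcal E_j^{(s)}=\emptyset$ specializes to fibres, and continuity of $\xi_j(b,\cdot)$ restricted to $(\mathcal E_j^{(s)})_b$ at the points of $(X'_i)_b\subset(\mathcal E_j^{(s)})_b\setminus(\mathcal E_j^{(s+1)})_b$ passes to the restriction to $(X'_i)_b$, which is exactly what cellhood of the graphs and bands requires. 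Your route buys a lighter base case (no continuity in the parameter, as you note) at the cost of a more intricate family fed to the induction hypothesis; the paper's route buys a stronger intermediate product (boundary functions continuous on their whole domains in $\mathbb R^{m+n-1}$), which makes its inductive step very short and the constant-type bookkeeping trivial. Two small points to tidy in a full write-up: like the paper's sets $S_i=\{b\;|\;|X_b\cap I|=i\}$, your base case should explicitly partition the parameter space by the exact number of boundary points, so that the partial functions $\xi_j$, the bands between consecutive graphs, and the top and bottom bands are well defined with constant fibre type; and the uniform bound on $|F_z|$ obtained from Theorem \ref{thm:uniform} is of the shape $2N|\Lambda|$ rather than $N$, which is harmless.
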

\begin{proof}
We first show the assertion for $n=1$.
For any definable set $S \subset \mathbb R^{m+1}$,  the notation $\operatorname{bd}_m(S)$ denotes the set $\{(x,y) \in \mathbb R^m \times \mathbb R\;|\; y \in \operatorname{bd}(S_x)\}$.
Since the locally o-minimal expansion of the group of reals is strongly locally o-minimal by \cite[Corollary 3.4]{TV}, there exists an open interval $I$ containing the origin such that $S \cap I$ is a finite union of points and open interval for any definable subset $S$ of $\mathbb R$.
Set $X= \bigcup_{\lambda \in \Lambda}\operatorname{bd}_m(A_\lambda \cap I)$.
The fibers $X_b$ are finite sets for all $b \in \mathbb R^m$.
It is obvious that any definable cell decomposition of $B$ partitioning $X_b \cap I$ partitions $\{(A_\lambda)_b \cap I\}_{\lambda\in\Lambda}$ for any point $b \in \mathbb R^m$.

There exists a positive integer $K$ and a positive element $r \in \mathbb R$ such that $|X_b \cap(\{b\} \times  ]-r,r[)| \leq K$ for any point $b \in \mathbb R^m$ by Theorem \ref{thm:uniform}.
Set $I= ]-r,r[$ and $S_i=\{b \in \mathbb R^m\;|\; |X_b \cap I|=i\}$ for all $0 \leq i \leq K$.
The family $\{S_i\}_{i=0}^K$ partitions the parameter space $\mathbb R^m$.
Let $y_j(b)$ be the $j$-th largest point of $X_b \cap I$ for all $b \in S_i$.
Set $y_0(b)=-r$ and $y_{i+1}(b)=r$ for all $b \in S_i$.
Applying Corollary \ref{cor:dim2} inductively, we can find a partition into definable sets 
\begin{equation*}
S_i = S_{i0} \cup \ldots \cup S_{im}
\end{equation*}
such that $S_{ik}=\emptyset$ or $\dim(S_{ik})=k$, and $y_j$ is continuous on $S_{ik}$ for any $0 \leq j \leq i$ and $0 \leq k \leq m$.
We set
\begin{align*}
&C_{ijk} =\{(x,y_j(x)) \in S_{ik} \times \mathbb R\} \ \ (1 \leq j \leq  i)\text{ and }\\
&D_{ijk} = \{(x,y) \in S_{ik} \times \mathbb R\;|\; y_j(x) < y < y_{j+1}(x)\} \ \ (0 \leq j \leq i)
\end{align*}
for any $0 \leq i \leq K$ and $0 \leq k \leq m$. 
Consider the family of maps $\mathcal F = \{\sigma: \Lambda \rightarrow \{0,1\}\}$.
Set 
\begin{align*}
&T^0_{ijk\sigma} =\{x \in S_{ik}\;|\; C_{ijk} \cap (\{x\} \times \mathbb R) \text{ is contained in } A_\lambda \text{ iff } \sigma(\lambda)=1\} \text{ and }\\
&T^1_{ijk\sigma} =\{x \in S_{ik}\;|\; D_{ijk} \cap (\{x\} \times \mathbb R) \text{ is contained in } A_\lambda \text{ iff } \sigma(\lambda)=1\}
\end{align*}
for any $0 \leq i \leq K$, $0 \leq j \leq i$, $0 \leq k \leq m$ and $\sigma \in \mathcal F$.
We finally set $C_{ijk\sigma}=C_{ijk} \cap (T^0_{ijk\sigma} \times \mathbb R)$ and $D_{ijk\sigma}=D_{ijk} \cap (T^1_{ijk\sigma} \times \mathbb R)$.
The partition 
\begin{equation*}
\mathbb R^m \times I = \bigcup_{i=0}^K \left(\bigcup_{k=1}^m \left(\bigcup_{\sigma \in \mathcal F}\left(\bigcup_{j=1}^i C_{ijk\sigma} \cup \bigcup_{j=0}^i D_{ijk\sigma}\right)\right)\right)
\end{equation*}
 is the desired partition.
Furthermore, the above definable functions $y_j$ can be chosen as continuous functions on $p(C_{ijk\sigma})$ and $p(D_{ijk\sigma})$, where $p:\mathbb R^{m+1} \rightarrow \mathbb R^m$ is the projection forgetting the last coordinate.
It is clear that the type of the cell $(X_i)_b$ is independent of the choice of $b$ with $(X_i)_b \not= \emptyset$.

We consider the case in which $n>1$.
Let $\pi:\mathbb R^{m+n} \rightarrow \mathbb R^{m+n-1}$ be the projection forgetting the last coordinate.
Applying the theorem for $n=1$ to the family $\{A_\lambda\}_{\lambda\in\Lambda}$, there exist an open interval $I$ containing the origin and a partition $\mathbb R^{m+n-1} \times I = Y_1 \cup \ldots \cup Y_l$ such that  $I=(Y_1)_b \cup \ldots \cup (Y_l)_b$ is a definable cell decomposition $I$ for any $b \in \mathbb R^{m+n-1}$ and $Y_i \subset A_\lambda$ or $Y_i \cap A_\lambda=\emptyset$ for any $1 \leq i \leq l$ and $\lambda \in \Lambda$. 
We can further assume that $Y_i$ is one of the following forms:
\begin{align*}
Y_i &= \{(x,f(x)) \in \pi(Y_i) \times \mathbb R\} \text{ and }\\
Y_i &= \{(x,y) \in \pi(Y_i) \times \mathbb R\;|\; f(x)<y<g(x)\}\text{,}
\end{align*}
where $f$ and $g$ are definable continuous functions on $\pi(Y_i)$ with $f<g$.

Apply the induction hypothesis to the family $\{\pi(Y_i)\}_{i=1}^l$.
There exist an open box $B'$ in $\mathbb R^{n-1}$ containing the origin and a partition $\mathbb R^{m} \times B' = Z_1 \cup \ldots \cup Z_p$ such that  $B'=(Z_1)_b \cup \ldots \cup (Z_p)_b$ is a definable cell decomposition $B'$ for any $b \in \mathbb R^{m}$, and $\pi(Y_i) \cap Z_j = \emptyset$ or $Z_j \subset \pi(Y_i)$ and the type of the cell $(Z_j)_b$ is independent of the choice of $b$ with $(Z_j)_b \not= \emptyset$.

Set $B=B' \times I$ and $X_{ij}=Y_i \cap \pi^{-1}(Z_j)$ for all $1 \leq i \leq l$ and $1 \leq j \leq p$.
Let $\{X_i\}_{i=1}^k$ be the family of non-empty $X_{ij}$'s.
It is easy to demonstrate that the family $\{X_i\}_{i=1}^k$ satisfies the requirement of the theorem.
We omit the proof. 
\end{proof}

\begin{corollary}
Consider a locally o-minimal expansion of the group of reals $(\mathbb R, <,0,+)$. 
For any definable subset $X$ of $\mathbb R^n$, there exist a positive integer $K$ and an open box $B$ in $\mathbb R^n$ containing the origin such that the definable set $X \cap (b+B)$ has at most $K$ connected components for all $b \in \mathbb R^n$.
Here, $b+B$ denotes the set given by $\{x \in \mathbb R^n\;|\; x-b \in B\}$.
\end{corollary}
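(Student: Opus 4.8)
The plan is to reduce the corollary to a single application of the uniform local definable cell decomposition theorem (Theorem \ref{thm:udcd}), using the group operation to convert the moving translate $b+B$ into fibers of one fixed definable set. Since addition is definable, the set
\begin{equation*}
W = \{(b,z) \in \mathbb R^n \times \mathbb R^n \;|\; b + z \in X\}
\end{equation*}
is a definable subset of $\mathbb R^n \times \mathbb R^n$. The crucial observation is that, for every $b \in \mathbb R^n$, the fiber $W_b$ is exactly the translate $X - b$, so that for any open box $B$ we have $X \cap (b+B) = b + (W_b \cap B)$. As the map $z \mapsto b+z$ is a definable homeomorphism of $\mathbb R^n$, the sets $X \cap (b+B)$ and $W_b \cap B$ have the same number of connected components.

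First I would apply Theorem \ref{thm:udcd} to the one-element family $\{W\}$, with $W \subset \mathbb R^n \times \mathbb R^n$, taking the first factor $\mathbb R^n$ as the parameter space and the second factor $\mathbb R^n$ as the fiber space. This produces an open box $B$ in $\mathbb R^n$ containing the origin together with a finite partition $\mathbb R^n \times B = X_1 \cup \ldots \cup X_k$ such that $B = (X_1)_b \cup \ldots \cup (X_k)_b$ is a definable cell decomposition of $B$ for every $b \in \mathbb R^n$, and such that each piece satisfies $X_i \subset W$ or $X_i \cap W = \emptyset$. I would then set $K = k$ and claim that this $B$ and $K$ are as required.

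The remaining step is to read off the fiber count. Fix $b \in \mathbb R^n$. Because the $X_i$ partition $\mathbb R^n \times B$ and each $X_i$ lies entirely inside or outside $W$, we obtain
\begin{equation*}
W_b \cap B = \bigcup_{i \,:\, X_i \subset W} (X_i)_b \text{,}
\end{equation*}
a union of at most $k$ of the cells appearing in the decomposition of $B$. Each cell $(X_i)_b$ is definably homeomorphic to an open box and is therefore connected, so this union has at most $k$ connected components. Combining this with the homeomorphism observation from the first paragraph shows that $X \cap (b+B)$ has at most $k = K$ connected components, and since $K$ does not depend on $b$ this is exactly the desired uniformity.

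I do not expect a serious obstacle here: the content lies entirely in recognizing that translation by the definable group operation turns the moving window $b+B$ into honest fibers of the single definable set $W$, after which Theorem \ref{thm:udcd} does all the work. The only points needing a line of justification are that $z \mapsto b+z$ is a definable homeomorphism preserving the number of connected components (immediate, since $+$ is definable and continuous with continuous inverse) and that every cell is connected (each cell is definably homeomorphic to an open box in some $\mathbb R^d$, and such boxes are connected over $\mathbb R$).
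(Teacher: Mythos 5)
Your proof is correct and takes essentially the same route as the paper: the paper applies Theorem \ref{thm:udcd} to the definable set $Y = \{(y,x) \in \mathbb R^n \times \mathbb R^n \;|\; x-y \in X\}$, whose fibers are exactly the translates of $X$ (the same device as your $W$, up to a sign convention in the group operation), and then bounds the number of connected components of $X \cap (b+B)$ by the number $K$ of cells in the uniform decomposition, using that cells are connected. Your explicit remarks that translation is a definable homeomorphism and that each cell is connected merely spell out steps the paper leaves implicit.
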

\begin{proof}
Consider the definable set $Y$ defined by 
\begin{equation*}
\{(y,x) \in \mathbb R^n \times \mathbb R^n\;|\; x-y \in X\} \text{.} 
\end{equation*}
Applying Theorem \ref{thm:udcd}, there exist an open box $B$ containing the origin and a partition $\mathbb R^{n} \times B = X_1 \cup \ldots \cup X_K$ such that  $B=(X_1)_b \cup \ldots \cup (X_K)_b$ is a definable cell decomposition $B$ partitioning the definable set $Y_b \cap B$ for any $b \in \mathbb R^{n}$.
It means that the definable set $X \cap (b+B)$ is the union of at most $K$ cells.
The set $X \cap (b+B)$ has at most $K$ connected components because cells are connected.
We have finished the proof.
\end{proof}

We begin to prove Theorem \ref{thm:main}.
\begin{lemma}\label{lem:last}
Let $\mathcal M=(M,<,\ldots)$ be a densely linearly ordered structure.
Consider a definable set $C \subset M^n$ defined by a first-order formula with parameter $\overline{c}$.
There exists a first-order sentence with parameters $\overline{c}$ expressing the condition for $C$ being a definable cell of type $(j_1, \ldots, j_d)$.
\end{lemma}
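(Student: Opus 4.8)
The plan is to prove this by induction on the dimension $d$ of the cell, mirroring the recursive definition of a cell of type $(j_1, \ldots, j_d)$. The statement asserts that the property ``$C$ is a definable cell of type $(j_1,\ldots,j_d)$'' is first-order expressible with parameters $\overline{c}$. Since $C$ is already given as a definable set via a formula $\varphi(x,\overline{c})$, I have direct access to the membership predicate $x \in C$ inside the language, and the task reduces to encoding the geometric/topological conditions defining a cell of a given type as a first-order sentence quantifying over points of $M^n$.

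First I would recall the recursive definition of a cell. A cell of type $(j_1,\ldots,j_d)$ in $M^n$ is one whose projection $\pi_{(j_1,\ldots,j_d)}$ restricts to a homeomorphism onto its image, and cells are built inductively: a cell in $M^1$ is a point or an open interval, and a cell in $M^{k}$ is either the graph of a definable continuous function over a lower-dimensional cell, or a band $\{(x,y) : f(x) < y < g(x)\}$ (allowing $\pm\infty$) over such a cell. The key observation is that each defining clause — ``$C_x$ is a single point,'' ``$C_x$ is an open interval,'' ``the relevant projection is injective on $C$,'' ``$C$ is the graph of a function,'' ``$C$ is a band between two functions'' — is itself first-order expressible using only the ordering, equality, and the membership formula $\varphi$. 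For instance, injectivity of $\pi_{(j_1,\ldots,j_d)}|_C$ is expressed by $\forall x\, \forall x'\, \bigl((x \in C \wedge x' \in C \wedge \pi_{(j_1,\ldots,j_d)}(x)=\pi_{(j_1,\ldots,j_d)}(x')) \rightarrow x=x'\bigr)$, where the projection equality is a finite conjunction of coordinate equalities.

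The heart of the argument is that continuity of the defining functions $f, g$ is first-order expressible in a \emph{densely} linearly ordered structure via the usual $\varepsilon$--$\delta$ condition phrased with order intervals rather than a metric: continuity at each point is captured by $\forall \varepsilon > 0\ \exists \delta > 0$ statements where the ``balls'' are open boxes defined by the ordering, and density guarantees that such interval neighborhoods behave correctly. I would build the expressing sentence $\Psi_{(j_1,\ldots,j_d)}$ by induction: the base case $d=0$ or working in $M^1$ is a short explicit formula, and the inductive step conjoins (i) a sentence asserting that the appropriate projection of $C$ onto the first $d-1$ active coordinates is a cell of the truncated type — obtained from the induction hypothesis with the projected defining formula — with (ii) a sentence asserting that over that base, $C$ is either a graph or a band of continuous definable functions. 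All quantifiers range over $M^n$ (or tuples thereof), all atomic subformulas use only $<$, $=$, and $\varphi(\cdot,\overline{c})$, so every parameter appearing is among $\overline{c}$, as required.

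\textbf{The main obstacle} I anticipate is bookkeeping the definability of the implicit functions $f$ and $g$ and expressing their continuity uniformly without referring to them as separate objects: since a first-order sentence cannot quantify over functions, I must phrase ``$C$ is the graph of a continuous function over its projection'' purely in terms of the point-set $C$ itself — namely that for each base point there is exactly one fibre value (functionality) together with the interval-based continuity condition on how these fibre values vary. Handling the unbounded band cases ($y > f(x)$, $y < g(x)$, or all of $M$) requires allowing the ``endpoint functions'' to take values $\pm\infty$, which I would encode as separate first-order clauses stating the absence of an upper or lower bound on the fibre rather than introducing an extended value; carefully enumerating these finitely many shapes and verifying each is first-order is the routine but delicate part. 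The density hypothesis is exactly what makes the $\varepsilon$--$\delta$ continuity clause sound, so I would flag explicitly where it is used.
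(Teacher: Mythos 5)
Your proposal is correct and follows essentially the same route as the paper: induct along the recursive definition of cells, express the base case (point or open interval) directly, and in the inductive step avoid quantifying over the functions $f,g$ by phrasing everything in terms of the point set $C$ itself --- fibers are (bounded) intervals, the endpoint functions are recovered as $\inf$ and $\sup$ of the fibers, and their continuity is written as an order-theoretic $\varepsilon$--$\delta$ condition, with unbounded band shapes handled by separate unboundedness clauses. The only cosmetic difference is that the paper organizes the induction on the ambient dimension $n$ (projecting away the last coordinate, truncating the type when $j_d=n$) rather than on the cell dimension $d$, but the content of your encoding is the same.
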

\begin{proof}
We prove the lemma by the induction on $n$.
When $n=1$, the definable set $C$ is a cell if and only if $C$ is a point or an open interval.
This condition is clearly expressed by a first-order sentence.

We next consider the case in which $n>1$.
The notation $\pi:M^n \rightarrow M^{n-1}$ denotes the projection forgetting the last factor.
The condition for $\pi(C)$ being a cell is represented by a first order sentence with parameters $\overline{c}$ by the induction hypothesis.
We only prove the lemma in the case in which the definable set $C$ is of the form
\begin{equation*}
C=\{(x,y) \in M^{n-1} \times M\;|\; f(x) < y < g(x)\}\text{,}
\end{equation*}
where $f$ and $g$ are definable continuous functions defined on $\pi(C)$.
We can demonstrate the lemma in the other cases in the similar way.
The above condition is equivalent to the following conditions:
\begin{itemize}
\item For any $x \in \pi(C)$, the fiber $C_x=\{y \in M\;|\; (x,y) \in C\}$ is a bounded interval.
\item Set $f(x)=\inf\{y \in M\;|\;(x,y) \in C\}$ and $g(x)=\sup\{y \in M\;|\;(x,y) \in C\}$ for any $x \in \pi(C)$, then $f$ and $g$ are continuous on $\pi(C)$.
\end{itemize}
The above conditions are obviously expressed by first-order sentences with parameters $\overline{c}$.
\end{proof}

The following corollary is Theorem \ref{thm:main}.
\begin{corollary}\label{cor:last}
Theorem \ref{thm:udcd} holds true for a structure elementarily equivalent to a locally o-minimal expansion of the group of reals. 
\end{corollary}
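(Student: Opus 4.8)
The plan is to transfer Theorem \ref{thm:udcd} from the reals to $\mathcal M$ by elementary equivalence, so the whole task is to repackage the conclusion of Theorem \ref{thm:udcd} as a single first-order sentence. Fix first-order formulas $\varphi_\lambda(x,\bar u)$ so that each $A_\lambda$ equals $\varphi_\lambda(M^{m+n},\bar c)$ for a fixed parameter tuple $\bar c$ from $M$, and let $\mathcal R$ be the locally o-minimal expansion of $(\mathbb R,<,0,+)$ with $\mathcal M\equiv\mathcal R$. I would treat $\bar u$ as free parameter variables, write a formula $\Phi(\bar u)$ asserting that the conclusion of Theorem \ref{thm:udcd} holds for the family $\{\varphi_\lambda(\cdot,\bar u)\}_\lambda$, arrange that $\mathcal R\models\forall\bar u\,\Phi(\bar u)$, and then deduce $\mathcal M\models\forall\bar u\,\Phi(\bar u)$ and instantiate at $\bar c$.

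The first key point is that the cells produced by Theorem \ref{thm:udcd} need no parameters beyond $\bar u$ and a single box radius $r$: inspecting the inductive construction in the proof of Theorem \ref{thm:udcd}, each piece $X_i$ is cut out by inequalities among the $A_\lambda$ and the auxiliary boundary functions $y_j$ (the $j$-th largest boundary point of a fiber), all of which are uniformly definable from $\bar u$. Thus each $X_i$ is $\theta_i(M^{m+n};\bar u,r)$ for a formula $\theta_i$ determined by a finite amount of combinatorial data: the number $k$ of pieces, the type $(j_1,\dots,j_d)$ of each fiber cell, the stratum dimensions, and the truth values of ``$X_i\subseteq A_\lambda$'' recorded by a map $\Lambda\to\{0,1\}$. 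For one fixed such \emph{scheme} $T$ I can then write a first-order formula $\Psi_T(\bar u)$ saying: there is $r>0$ so that the sets $X_i=\theta_i(\cdot;\bar u,r)$ partition $M^m\times\,]-r,r[^{\,n}$, each fiber $(X_i)_b$ is a definable cell of the prescribed type, these fibers form a cell decomposition, and the prescribed $A_\lambda$-incidences hold for all $b$. Here Lemma \ref{lem:last} is exactly what lets me express ``$(X_i)_b$ is a cell of type $(j_1,\dots,j_d)$'' and hence ``the fibers form a cell decomposition'' by first-order conditions with parameters $\bar u,r$, uniformly in $b$.

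The second, and main, obstacle is to make the disjunction over schemes \emph{finite}, since elementary equivalence transfers only a single sentence. For this I would extract from Theorem \ref{thm:uniform} (applied to the sets $\varphi_\lambda$ with the parameters treated as extra variables) a bound $K$, independent of $\bar u$, on the number of cells needed; the cell types, the stratum data, and the incidence maps $\Lambda\to\{0,1\}$ already range over a fixed finite set, so only finitely many schemes $T_1,\dots,T_N$ can occur over $\mathcal R$. Theorem \ref{thm:udcd} applied to $\mathcal R$ then says precisely that $\mathcal R\models\forall\bar u\,\bigvee_{t=1}^N\Psi_{T_t}(\bar u)$, a genuine first-order sentence. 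Elementary equivalence yields $\mathcal M\models\forall\bar u\,\bigvee_{t=1}^N\Psi_{T_t}(\bar u)$, and instantiating $\bar u=\bar c$ produces the required box and partition in $\mathcal M$, with $b$-independent cell types by construction. The delicate part will be verifying that the construction underlying Theorem \ref{thm:udcd} really does yield cells of uniformly bounded syntactic complexity definable over $\bar u$ (so that ``scheme'' is a finite invariant), together with the extraction of the uniform bound $K$; once these are in place, the transfer itself is the standard soft step.
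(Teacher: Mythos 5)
Your overall transfer strategy is workable and you correctly identify Lemma \ref{lem:last} as the device that makes cellhood (of a prescribed type) first-order, but your route is genuinely different from the paper's and shoulders a burden the paper sidesteps entirely. You propose transferring a universal sentence $\forall\bar u\,\bigvee_{t=1}^N\Psi_{T_t}(\bar u)$, which forces you to prove two substantial claims you explicitly defer: (i) that the inductive construction behind Theorem \ref{thm:udcd} yields pieces uniformly definable from $\bar u$ and a radius $r$ alone --- not obvious as written, since in the $n=1$ step the strata $S_{ik}$ are cut out by dimension conditions obtained from Corollary \ref{cor:dim2}, so you would need either definability of dimension in families or a replacement of that stratification by an iterated discontinuity-locus construction of uniformly bounded length; and (ii) that only finitely many schemes occur, for which you invoke Theorem \ref{thm:uniform} with the parameters adjoined as extra variables. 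In effect your plan amounts to re-proving a parametrized version of Theorem \ref{thm:udcd}, and as it stands the pivotal uniform-definability claim is asserted rather than established.

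The paper applies your trick from (ii) once, globally, and thereby deletes the whole scheme apparatus. Since the conclusion of Theorem \ref{thm:udcd} is already uniform over the fiber variables, one absorbs the parameter tuple $\bar c$ of the $A_\lambda$ into those variables: set $A'_\lambda=\{(z,x,y)\in M^p\times M^m\times M^n \mid \mathcal M\models\varphi_\lambda(x,y,z)\}$ and prove the corollary for the parameter-free family $\{A'_\lambda\}$ with $m$ replaced by $p+m$; the original statement follows by taking the fiber at $z=\bar c$. Applying Theorem \ref{thm:udcd} over $\mathbb R$ then produces one partition $\mathbb R^{p+m}\times B^{\mathbb R}=X_1^{\mathbb R}\cup\ldots\cup X_k^{\mathbb R}$, valid for all parameter values simultaneously, defined by some fixed formulas $\psi_i(x,y,\bar c')$ with a single real parameter tuple $\bar c'$ --- no analysis of how the $X_i^{\mathbb R}$ were constructed, no bound independent of parameters, and no finite disjunction are needed, because the transferred sentence is simply existential: there exists $\bar c'$ such that the $\psi_i$ define a partition of $\mathbb R^m\times B^{\mathbb R}$ into sets whose fibers are cells of fixed types (Lemma \ref{lem:last}) with the prescribed $A_\lambda$-incidences. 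If you complete your plan, notice that carrying out step (ii) by adjoining the parameters already hands you this simpler argument; I would restructure around that observation rather than verifying the uniform-definability and finiteness-of-schemes claims directly.
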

\begin{proof}
Consider a structure $\mathcal M=(M,+,0,<,\ldots)$ elementarily equivalent to a locally o-minimal expansion $\widetilde{\mathbb R}$ of the group of reals. 
We first reduce to the case in which $A_\lambda$ are definable without parameters for all $\lambda \in \Lambda$.
There exist parameters $\overline{c} \in M^p$ and first-order formulae $\varphi_{\lambda}(x,y,\overline{c})$ with parameters $\overline{c}$ defining the definable sets $A_{\lambda}$ for all $\lambda \in \Lambda$.
Set $A'_\lambda =\{(z,x,y) \in M^p \times M^m \times M^n\;|\; \mathcal M \models \varphi_\lambda(x,y,z)\}$.
If the corollary holds true for the family $\{A'_\lambda\}_{\lambda \in \Lambda}$, the corollary also holds true for the family $\{A_\lambda\}_{\lambda \in \Lambda}$ because $A_\lambda$ is the fiber $(A'_\lambda)_{\overline{c}}=\{(x,y) \in M^m \times M^n\;|\; (\overline{c},x,y) \in A'_\lambda\}$.
Hence, we may assume that $A_\lambda$ are definable without parameters for all $\lambda \in \Lambda$.
The notations $\varphi_\lambda(x,y)$ denote the first-order formulae without parameters defining the definable sets $A_\lambda$.

Let $A_\lambda^{\mathbb R}$ be the definable subset of $\mathbb R^{m+n}$ defined by the formula $\varphi_\lambda(x,y)$ for each $\lambda \in \Lambda$.
By Theorem \ref{thm:udcd}, there exist an open box $B^{\mathbb R}$ in $\mathbb R^n$ containing the origin and a partition into definable sets 
\begin{equation*}
\mathbb R^m \times B^{\mathbb R} = X_1^{\mathbb R} \cup \ldots \cup X_k^{\mathbb R}
\end{equation*}
such that the fibers $(X_i^{\mathbb R})_b$ are definable cells of a fixed type for all $b \in \mathbb R^m$ with $(X_i^{\mathbb R})_b \not= \emptyset$ and $X_i^{\mathbb R} \subset A_\lambda^{\mathbb R}$ or $X_i^{\mathbb R} \cap A_\lambda^{\mathbb R} = \emptyset$ for all $1 \leq i \leq k$.
There exist parameters $\overline{c} \in \mathbb R^p$ and first-order formulae $\psi_i(x,y,\overline{c})$ with parameters $\overline{c}$ defining the definable sets $X_i^{\mathbb R}$ for all $1 \leq i \leq k$.

Using the first-order formulae $\psi_i(x,y,\overline{c})$, the condition that
\begin{itemize}
\item there exists an open box $B^{\mathbb R}$ in $\mathbb R^n$ containing the origin and
\item $\mathbb R^m \times B^{\mathbb R} = X_1^{\mathbb R} \cup \ldots \cup X_k^{\mathbb R}$
\end{itemize}
can be expressed by a first-order sentence $\Phi(\overline{c})$ with parameters $\overline{c}$.
Let $\Psi_i(\overline{c})$ be the sentence expressing the condition $X_i^{\mathbb R} \subset A_\lambda^{\mathbb R}$ or $X_i^{\mathbb R} \cap A_\lambda^{\mathbb R} = \emptyset$ for any $1 \leq i \leq k$.
The condition for the fiber $(X_i^{\mathbb R})_b$ being a cell for all $b \in \mathbb R^m$ is expressed by a first-order formula $\Pi_i(\overline{c})$ with parameters $\overline{c}$ by Lemma \ref{lem:last}.
We have
\begin{equation*}
\widetilde{\mathbb R} \models \Phi(\overline{c}) \wedge \bigwedge_{i=1}^k \left( \Psi_i(\overline{c}) \wedge \Pi_i(\overline{c}) \right)
\end{equation*}
by the definitions of $\Phi(\overline{c})$, $\Psi_i(\overline{c})$ and $\Pi_i(\overline{c})$.
We therefore get
\begin{equation*}
\widetilde{\mathbb R} \models \exists \overline{c}\ \Phi(\overline{c}) \wedge \bigwedge_{i=1}^k \left( \Psi_i(\overline{c}) \wedge \Pi_i(\overline{c}) \right) \text{.}
\end{equation*}
Since $\mathcal M$ is elementarily equivalent to $\widetilde{\mathbb R}$, we finally obtain 
\begin{equation*}
\mathcal M \models \exists \overline{d}\ \Phi(\overline{d}) \wedge \bigwedge_{i=1}^k \left( \Psi_i(\overline{d}) \wedge \Pi_i(\overline{d}) \right) \text{.}
\end{equation*}
Take $d \in M^p$ satisfying the above condition and set $X_i=\{(x,y) \in M^m \times M^n\;|\; \mathcal M \models \psi_i(x,y,\overline{d})\}$ for all $1 \leq i \leq k$.
Then, there exists an open box $B$ in $M^n$ containing the origin such that the partition $M^m \times B=X_1 \cup \ldots \cup X_k$ is the desired partition.
\end{proof}

\end{document}